\title{Rainbow paths and large rainbow matchings}
\author{Ron Aharoni} \thanks{We acknowledge the financial support from the Ministry of Educational and Science of the Russian Federation in the framework of MegaGrant no. 075-15-2019-1926 when the first author worked on Section 3 of the paper.\\
\indent Ron Aharoni:
Department of Mathematics, Technion, Israel and MIPT.
\url{ra@technion.ac.il}.
The research of R.~Aharoni was supported in part by the Israel Science Foundation (ISF) grant no. 2023464 and the Discount Bank Chair at the Technion. This paper is part of a project that has received funding from the European Union's Horizon 2020 research and innovation programme under the Marie Skldowska-Curie grant agreement no.\ 823748.
}
\author{Eli Berger} \thanks{
Eli Berger: Department of Mathematics, University of Haifa,  Israel. \url{berger@math.haifa.ac.il}.}
\author{Maria Chudnovsky} \thanks{Maria Chudnovsky: Department of Mathematics, Princeton University, USA, \url{mchudnov@math.princeton.edu}. Supported by  NSF Grant DMS-1763817.}
\author{Shira Zerbib} \thanks{Shira Zerbib: Department of Mathematics, Iowa State University, USA.  \url{zerbib@iastate.edu}. Supported by NSF grant DMS-1953929. \\ \indent  The authors were supported by US-Israel Binational Science Foundation (BSF) grant no. 2016077.}
\newtheorem{theorem}{Theorem}[section]
\newtheorem*{theorem*}{Theorem}
\newtheorem{lemma}[theorem]{Lemma}
\newtheorem{question}[theorem]{Question}
\newtheorem{claim}[theorem]{Claim}
\newtheorem{conjecture}[theorem]{Conjecture}
\theoremstyle{definition}
\newtheorem{definition}[theorem]{Definition}
\theoremstyle{remark}
\newtheorem{remark}[theorem]{Remark}
\newcommand{\cf}{\mathcal{F}}
\newcommand{\F}{\mathcal{F}}
\newcommand{\G}{\mathcal{G}}
\newcommand{\cs}{\mathcal{S}}
\newcommand{\Z}{\mathcal{Z}}
\newcommand{\ch}{\mathcal{H}}
\newcommand{\cg}{\mathcal{G}}
\newcommand{\T}{\mathcal{T}}
\begin{document}

\begin{abstract}

A conjecture of the first two authors is that   $n$ matchings of size $n$
in any graph have a rainbow matching of size $n-1$.  We prove a lower bound of $\frac{2}{3}n-1$, improving on the trivial $\frac{1}{2}n$, and an analogous result for hypergraphs. 
For 
$\{C_3,C_5\}$-free graphs and for disjoint matchings we obtain a lower bound of $\frac{3n}{4}-O(1)$. We also discuss a conjecture on rainbow alternating paths, that if true would yield a lower bound of $n-\sqrt{2n}$. We prove the non-alternating (ordinary paths) version of this conjecture. 

\end{abstract}

\maketitle

\section{Introduction}

Using a common terminology, a  {\em family} is a multiset, namely elements may  repeat. While the notation for sets uses curly brackets, families are denoted using ordinary parentheses, so in the notation $(a_i \mid i \in I)$  equality may hold between some $a_i$s. 

Given a family $\cs=(S_1, \ldots,S_n)$ of sets,
we write $\lVert \cs \rVert$ for $\sum_{i \le n}|S_i|$.

For such $\cs$, an {\em $\cs$-rainbow set} (or just ``rainbow set'' if the identity of $\cs $ is  clear from the context) is the range of a partial choice function. That is, it is a set $\{x_{i_j} \mid 1\le i_1<i_2<\ldots <i_m\le n\}$, where $x_{i_j} \in S_{i_j} ~~(1\le j \le m)$.

For integers $a,b,c,r$ we write $(a,b)\to_r c$ if every family of $a$ matchings in an $r$-uniform hypergraph, each of size $b$, has a rainbow matching of size  $c$. If we demand this condition only for $r$-partite hypergraphs, we write $(a,b)\to_r^P c$. If $r=2$ we omit its mention and write just $(a,b)\to c$ and $(a,b)\to^P c$, respectively.

A famous conjecture of Ryser-Brualdi-Stein \cite{brualdiryser, stein} is that in an $n \times n$ Latin square there exists a transversal (sub-permutation submatrix with distinct symbols) of size $n-1$, and that for $n$ odd there exists a transversal of size $n$. In \cite{ab} the first part of this conjecture was strengthened to:

\begin{conjecture}[Aharoni-Berger \cite{ab}]\label{ab}
$(n,n)\to n-1. $
\end{conjecture}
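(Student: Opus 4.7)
The plan is to argue by contradiction. Suppose $R$ is a maximum rainbow matching for the family $(M_1,\ldots,M_n)$ and $|R|=k\le n-2$, so at least two matchings, say $M_a$ and $M_b$, are unrepresented. I would first record the trivial lower bound: every edge of $M_a$ must meet $V(R)$, otherwise it extends $R$, and since $M_a$ is itself a matching, distinct edges of $M_a$ meet $V(R)$ in distinct vertices, forcing $n=|M_a|\le |V(R)|=2k$ and hence $k\ge n/2$. The same holds for $M_b$, and the true starting point for improvement is to exploit $M_a$ and $M_b$ \emph{simultaneously}, together with a third unrepresented matching that appears once the gap $n-k$ grows.

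The next step is to introduce rainbow alternating paths. Fixing an edge $f_0\in M_a$, build a BFS-like tree of alternating rainbow walks $f_0,e_{i_1},f_1,e_{i_2},f_2,\ldots$ where each $e_{i_j}\in R$ is the representative of $M_{\sigma(i_j)}$ and each $f_j$ is drawn from a matching not yet used on the walk (so that $M_a$, $M_b$, and every matching whose $R$-representative has been peeled off on the walk are ``available colors''). If the walk ever exits $V(R)$ at a free vertex, the corresponding swap produces a rainbow matching of size $k+1$, contradicting maximality. Thus the entire tree must be trapped inside $V(R)$, which in turn forces every available matching to have essentially all of its $n$ edges crammed onto endpoints of $R$-edges whose representatives already appear on the tree.

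The final step is a double count over pairs $(M,e_i)$ with $M$ unrepresented and $e_i\in R$, weighting by the number of $M$-edges incident to $e_i$ and using the trapping condition produced above; the target is an inequality that contradicts $k\le n-2$. The main obstacle is precisely this step. Trivial counting yields only $k\ge n/2$, the paper's refinement reaches $k\ge \tfrac23 n-1$, and pushing all the way to $n-1$ seems to demand either a topological input of Aharoni--Haxell type, or a genuinely new combinatorial identity that exploits having $n-k\ge 2$ free matchings in a non-additive way. Because Conjecture~\ref{ab} strengthens Ryser--Brualdi--Stein, I would not expect a purely elementary augmenting-path argument to close the gap; a realistic intermediate goal is to first settle the rainbow alternating-path conjecture alluded to in the abstract (which would already give $n-\sqrt{2n}$) and then try to transfer that machinery from paths to matchings.
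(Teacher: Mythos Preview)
The statement you are attempting to prove is Conjecture~\ref{ab}, an \emph{open conjecture}; the paper does not contain a proof of it, and in fact the main results of the paper (Theorem~\ref{main}) are partial results toward it. So there is no ``paper's own proof'' to compare against.

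Your proposal is not a proof either, and to your credit you essentially say so in the final paragraph. The first two paragraphs set up exactly the framework the paper uses for its $\tfrac{2}{3}n$ bound: take a maximum rainbow matching $R$, use that every unrepresented $M_a$ is covered by $V(R)$, and try to exploit multiple unrepresented matchings via alternating structures. But the ``final step'' double count you describe is precisely where the difficulty lies, and you correctly note that trivial counting gives only $k\ge n/2$ while the paper's refinement gives $k\ge \tfrac{2}{3}n-O(1)$. You offer no new idea for getting past this barrier; instead you point (accurately) to the rainbow alternating-path conjecture (Conjecture~\ref{conjpaths} in the paper) as an intermediate target yielding $n-\sqrt{2n}$.

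In short: there is a genuine gap, namely the entire passage from $\tfrac{2}{3}n$ (or even $n-\sqrt{2n}$) to $n-1$. Your write-up is a reasonable summary of the known landscape and obstacles, but it should not be labeled a proof proposal. Since Conjecture~\ref{ab} implies the Ryser--Brualdi--Stein conjecture, any valid proof would be a major breakthrough; nothing in your outline suggests such a breakthrough.
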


In \cite{ab} this was conjectured only for bipartite graphs, but there is no counterexample known also for general graphs.

Here are some known facts:

\begin{enumerate}[ (F1)]

    \item $(n,\frac{3}{2}n) \to^P n$   \cite{akz32}.
    \item $(n,\frac{3}{2}n+o(n)) \to n$ \cite{clemens}.
    \item $(2n-1,n)\to^P n$ (The Latin rectangle case was proved in \cite{drisko}, the general case in \cite{ab}).

    \item $2n-1$ matchings in a bipartite graph, of respective sizes $1, 2, 3, \ldots ,n-1,n,n,\ldots ,n$
($n$ is repeated $n$ times) have a rainbow matching of size $n$.  \cite{akzunique}.    

\item \label{strongerdrisko} Let $m \ge n$. Any $2n-1$ matchings of size $m$ have a matching of size $m$ representing $n$ of them \cite{ab}.

     \item \label{woolbrightfact} $(n,n) \to^P n-\sqrt{n}$ \cite{woolbright}.
     \item $(\lfloor \frac{k+2}{k+1}n\rfloor -(k+1),n) \to^P n-k$ \cite{BGS}.
    
    \item $(3n-3,n)\to n$ \cite{abkk}.
    \item 
    $2n$ matchings of size $n$ have a rainbow set with a fractional matching of size $n$ \cite{ahj}.
    
    \item $(n,n+o(n)) \to^P n$ \cite{pok}.
    \item $n$ disjoint matchings of size $n+o(n)$ in any graph
    have a rainbow matching of size $n$. \cite{GRWW}.
\end{enumerate}

It was conjectured in \cite{BGS} and in  \cite{ab} that $(2n,n) \to n$. This would follow, in particular, from Conjecture \ref{ab}.

In $r$-uniform hypergraphs a greedy argument  yields 
$(n,n)\to_r \lceil \frac{n}{r}\rceil$. We shall improve this to
$(n,n)\to_r \frac{n}{r-\frac{1}{2}}+O(\frac{1}{r})$.  In the case $r=2$, the explicit calculation yields $(n,n) \to \frac{2}{3}n-\frac{4}{3}.$ This improves upon the result $(n,n) \to \frac{2}{3}n-o(n)$, that follows from  (F2). When the graph is $\{C_3,C_5\}$-free or  the given matchings are disjoint,  the bound can be improved to $\frac{3}{4}n-O(1)$: 
\begin{theorem}\label{main}\hfill
\begin{enumerate}
    \item 
 $(n,n) \to \frac{2}{3}n-\frac{4}{3}.$
 \item $(n,n)\to_r \frac{n-\frac{1}{2}-\frac{3}{4r-6}}{r-\frac{1}{2}}.$
   \item 
If  $\F=(F_1, \ldots ,F_n)$ is a family of matchings of size $n$ in a $\{C_3,C_5\}$-free graph, then there exists a rainbow matching of size at least $ \frac{3}{4}n-\frac{9}{4}$.  
 
\item If  $\F=(F_1, \ldots ,F_n)$ is a family of disjoint matchings of size $n$ in a graph, then there exists a rainbow matching of size at least $ \frac{3}{4}n-\frac{9}{2}$.
\end{enumerate}
\end{theorem}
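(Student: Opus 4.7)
The natural strategy is a contradiction argument built around a maximum rainbow matching. Take $R$ to be a maximum $\F$-rainbow matching of size $k$, indexed so that $r_i\in M_i$ for $1\le i\le k$, and write $V_R:=V(R)$ and $\cb:=(M_{k+1},\ldots,M_n)$. By maximality of $R$, every edge of every $M\in\cb$ meets $V_R$. Classifying these edges as Type~1 (one endpoint in $V_R$) or Type~2 (both endpoints in $V_R$) and double-counting the triples $(M,e,v)$ with $M\in\cb$, $e\in M$, $v\in e\cap V_R$ against the $(v,M)$ pairs of $V_R\times\cb$, each contributing at most one triple, yields $|T_1|+2|T_2|\le 2k(n-k)$. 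Together with $|T_1|+|T_2|=n(n-k)$ this gives $|T_1|\ge 2(n-k)^2$. By itself this recovers only $k\ge n/2$; the jump to $\tfrac{2}{3}n$ comes from a swap argument at Type~1 edges.

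For each Type~1 edge $e=\{u,v\}\in M_j$ with $u\in r_i=\{u,w\}$, the swap $R':=R-r_i+e$ is a rainbow matching of size $k$ in which $w$ is free and $M_i$ is unrepresented. Maximality of $R$ forces a rigidity constraint at $w$: any edge $\{w,y\}$ belonging to some $M_{j'}\in\cb\setminus\{M_j\}$ with $y\notin V_R$ must satisfy $y=v$, for otherwise $R'\cup\{\{w,y\}\}$ is a strictly larger rainbow matching. Playing two Type~1 edges at $u$ against each other shows that if two distinct free-side endpoints $v_1\ne v_2$ appear at $u$, then at most two matchings of $\cb$ can supply Type~1 edges through $w$. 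Averaging this constraint over the $2k$ endpoints of $R$ gives an upper bound of the shape $|T_1|\le k(n-k+2)$, and matching it against $|T_1|\ge 2(n-k)^2$ yields $k\ge\tfrac{2}{3}n-\tfrac{4}{3}$.

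For part~(2) the same scheme runs in the $r$-uniform setting. The type decomposition becomes $\sum_i i\,a_i\le rk(n-k)$ and $\sum_i a_i=n(n-k)$, giving $a_1\ge(n-k)(2n-rk)$; the swap at a Type~1 edge frees an $(r-1)$-set in place of the single vertex $w$, so the rigidity applies at $r-1$ freed vertices per edge of $R$. Averaging over the $rk$ vertices of $V_R$ and optimising the counting inequalities produces the claimed $(r-\tfrac{1}{2})^{-1}$ rate together with its lower-order term.

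For parts~(3) and~(4), the boost from $\tfrac{2}{3}$ to $\tfrac{3}{4}$ comes from iterating the swap once more: after freeing $w$, perform a second exchange along a Type~1 edge meeting $w$, freeing a further vertex of $V_R$ and obtaining enough slack to augment unless a short cycle ($C_3$ or $C_5$) arises in the graph formed by the two exchanged edges together with elements of $R$. Under the $\{C_3,C_5\}$-free hypothesis these cycles are forbidden directly, and when the matchings are pairwise disjoint the same cycles cannot arise for structural reasons, since two distinct matchings cannot share an edge. The principal obstacle will be the bookkeeping for this two-step exchange: one must verify that the composition of two swaps genuinely enlarges the rainbow matching rather than merely reshuffling representatives, and extract from the sharpened averaging the constant $\tfrac{3}{4}$ with the stated additive losses $\tfrac{9}{4}$ and $\tfrac{9}{2}$, which I expect to absorb most of the technical work.
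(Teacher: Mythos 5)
Your argument for part (1) is correct and genuinely different from the paper's. The paper classifies \emph{edges of $R$} as $G$-wasteful or not, proves via a ``no two mutually orthogonal matchings of order $r$'' lemma (Lemma~\ref{mols}, via Lemma~\ref{punchline}) that at most $r$ matchings can be non-wasteful at any $e\in R$, and combines $\sum_e|T(e)|\le rq$ with a covering lower bound on $t(G)$. You instead classify \emph{edges of the unrepresented matchings} by how many endpoints land in $V(R)$, and replace the orthogonality lemma with a vertex-local swap-rigidity bound. I checked that your key per-edge claim $a(u)+a(w)\le n-k+2$ (where $a(v)$ counts matchings in $\cb$ with a Type~1 edge at $v$) does hold in all cases: when $b(u)\ge 2$ you get $a(w)\le 2$; when $b(u)=b(w)=1$ the two common targets coincide, so the contributing matchings at $u$ and at $w$ are disjoint families and $a(u)+a(w)\le n-k$; the remaining cases are trivial. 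Summing over the $k$ edges and comparing with $|T_1|\ge 2(n-k)^2$ produces $3k^2-(5n+2)k+2n^2\le 0$, which is exactly the same quadratic as the paper's $(2n-3q)(n-q)\le 2q$, so the constants agree. This is a nice alternative that avoids the Latin-squares lemma entirely.

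Parts (2)--(4) of your proposal, however, are not yet proofs. For (2) your sketch gives $a_1\ge(n-k)(2n-rk)$, but you do not supply any analogue of the per-edge bound: the swap at an $r$-edge frees an $(r-1)$-set and covers a fresh $(r-1)$-set, so the rigidity you invoke for $r=2$ (two distinct targets at $u$ kill all but two matchings at $w$) weakens substantially, and it is far from clear that the resulting quadratic matches $(r-\tfrac12)^{-1}(n-\tfrac12-\tfrac{3}{4r-6})$ — if you write it out you will find a weaker coefficient than the paper's $(2n-q(2r-1))(n-q)\le rq$, which is obtained from a different pairing argument on the wasteful edges of $R$. For (3) and (4) you correctly identify the relevant idea — a second-order exchange producing an odd cycle obstruction — but you say yourself the bookkeeping is absent. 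The paper's handling of this is substantial: it introduces ``half-$G$-wasteful'' pairs $\{e,f\}\subset R$, sets up the auxiliary bipartite graph $B$ on $\G\times R$, and proves degree bounds on $B$ (Lemmas~\ref{sumdegrees} and~\ref{sumdegrees1}) via a careful combinatorial analysis (Claims~\ref{C4} and~\ref{degrees} and their disjoint-matching analogues), which then feeds into the refined covering inequality $n\le 2t+\tfrac32 w+\tfrac43(q-t-w)$. That machinery, or a real substitute for it, is what you still need to supply; ``absorb most of the technical work'' is precisely the part you cannot skip.
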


Theorem \ref{main} is proved in Section \ref{sec:main}. 
\medskip

A main tool used in the study of rainbow matchings is alternating paths. For example, among facts (F1-F10),  only for two, (F4) and (F8), there is no alternating paths proof known (the existing proofs use topology).  Often the proof goes through results on rainbow directed paths. 
To state the latter, we  need some definitions.

Though the graphs we are considering are all undirected, all paths will be assumed below to be directed. The initial and terminal vertices of a path $P$ are denoted by $in(P)$ and $ter(P)$, respectively.
\begin{definition}
Given two sets $S,T$ of vertices, a directed path $P$ is called an $S-T$-{\em path} if $in(P) \in S,~ ter(P) \in T$, and $V(P)\cap 
(S\cup T)=\{in(P), ter(P)\}$.\end{definition}

\begin{definition}\hfill
\begin{enumerate}
    \item 
Let $\ch$ be a family of (not necessarily distinct or disjoint) sets of paths.  A path is called {\em strongly rainbow} if each of its edges is chosen from a path belonging to a  different $F\in \ch$.
\item Let $M$ be a matching, and let $\ch$ be a family of sets of $M$-alternating paths.  An $M$-alternating  path is called {\em strongly rainbow} if each of its non-$M$ edges is chosen from a path belonging to a  different $F\in \ch$.

This is ``double rainbow-ness": a rainbow set of edges, one from each path in a rainbow 
set of paths. 

\item
Let $\F=(P_1, \ldots ,P_m) $ be an ordered  family of directed paths. A directed path $P$ is called {\em rainbow-monotone} (with respect to $\F$) if its edges, as ordered by $P$, are $(e_1, \ldots ,e_m)$, satisfy $e_i \in E(P_{j_i})$, where
$j_1<j_2<\ldots <j_m$.

\end{enumerate}
\end{definition}

Similar definitions applys to $M$-alternating paths, where $M$ is a matching, and the edges referred to in the definitions are the non-$M$ edges.

Here is a list of known facts about rainbow paths. Part (\ref{alternatingsingleton}) was proved in \cite{ABCHS}, Part (\ref{stdisjoint}) was proved in 
\cite{akz32}, and the others are taken from  \cite{akzunique}.   
We use   the following notation:  $G$ is an undirected graph, $M$ is a matching in $G$,  $S,T$ are subsets of $V(G)$, and
$Y=V(G)\setminus (S\cup T)$.
In each of these facts we add something fact that was in fact included in the original proof, without explicit mention - rainbow monotonicity.  

\begin{theorem}\label{combinations} \hfill
\begin{enumerate}
    \item 
    \label{disjointsimple}
       Let $F$ be a sequence of (not necessarily distinct) directed $S-T$  paths. If $S\cap T=\emptyset$ and $|F|>|Y|$   then there exists a directed $S-T$ rainbow-monotone path. 
    
    \item {\em (Corollary of (\ref{disjointsimple}))}: Let  $G$ be bipartite. Let $F$ be sequence of augmenting $M$-alternating paths with $|F|>|M|$. Then there exists a rainbow-monotone augmenting  $M$-alternating path. 
    
    
    \item \label{alternatingsingleton} 
In a general graph, if $F$ is a family of augmenting $M$-alternating paths with $|F|>2|M|$ then there exists a rainbow augmenting $M$-alternating path. (No monotonicity claim in this case).

     \item \label{stdisjoint}{\em (Strengthening of (\ref{disjointsimple})):}  
     Let $\ch$ be a family of sets of disjoint directed $S-T$ paths. If  $S \cap T=\emptyset$ and $\lVert \ch \rVert > |Y|$ then there exists a directed strongly rainbow $S-T$ path.  (Recall, $\lVert \ch \rVert = \sum_{H\in \ch} |H|$.)
     
     The proof in \cite{akzunique} yields a monotonicity, stronger version: 
     
     \item \label{monotone_disjoint}
     
     For $S,T$ be as above, 
     any sequence of more than $|Y|$ $S-T$ paths has a rainbow-monotone $S-T$ path.
     \item \label{bipartitefamiliesalt} {\em (Corollary of 
     (\ref{stdisjoint})):} If $G$ is bipartite, $M$ is a matching, $\ch$ is a family of sets of sets of disjoint augmenting $M$-alternating paths and $\lVert \ch \rVert > |M|$, then there exists a strongly rainbow augmenting alternating path. Again, this is a corollary of a  monotonicity version.
    \end{enumerate}
\end{theorem}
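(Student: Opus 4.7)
The plan is to prove parts (1) and (4) directly via a reachability argument that makes the monotonicity claim transparent, derive parts (2) and (6) by the standard $M$-contraction reduction in the bipartite case, and treat part (3) separately via the blossom argument of \cite{ABCHS}. Part (5) is essentially a restatement of (1), so establishing one yields the other.

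For (1), I would track the increasing sequence of reachability sets $R_0 = S \subseteq R_1 \subseteq \cdots$, where $R_i$ consists of the vertices $v$ admitting a rainbow-monotone path from $S$ to $v$ whose interior lies in $Y$ and whose indices form a subsequence of $(1,\dots,i)$. Assume for contradiction that $R_k \cap T = \emptyset$ with $k > |Y|$. For each $i < k$, the path $P_{i+1}$ is an $S$-$T$ path with initial vertex in $R_i$ and terminal vertex outside $R_i$, so it contains an edge $(u,v)$ with $u \in R_i$ and $v \notin R_i$. A witness path $Q$ for $u$ cannot pass through $v$, since otherwise a prefix of $Q$ would already witness $v \in R_i$; hence appending the edge $(u,v)$ from $P_{i+1}$ to $Q$ yields a simple rainbow-monotone path to $v$, showing $v \in R_{i+1} \setminus R_i$. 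Thus $|R_i \cap Y|$ strictly increases at each step, which is impossible once $i$ exceeds $|Y|$. Therefore $R_j \cap T \ne \emptyset$ for some $j \le k$, giving the desired rainbow-monotone $S$-$T$ path. The argument for (4) is the same after ordering $\ch = (H_1, H_2, \dots)$ arbitrarily and selecting at step $i$ an edge from some path in $H_i$ that crosses from $R_{i-1}$ to its complement; the strong rainbow condition follows automatically from this construction, and monotonicity is built in. For parts (2) and (6), I would apply the classical reduction in which each $e \in M$ is contracted to a single vertex $m_e$ of a bipartite auxiliary digraph, where augmenting $M$-alternating paths correspond to directed paths from the unmatched vertices on one side ($S$) to those on the other ($T$) with $|Y| = |M|$; then (1) and (4) yield (2) and (6).

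The main obstacle is part (3), where the bipartite assumption is dropped. The contraction reduction fails because an alternating walk in a non-bipartite graph can traverse a single vertex twice through a blossom, so the reachability set $R_i$ no longer corresponds to the existence of a \emph{simple} alternating path; this is precisely what forces the weaker hypothesis $|F| > 2|M|$ and the absence of any monotonicity claim in (3). I would follow the approach of \cite{ABCHS}: select one path in $F$, augment $M$ along it while contracting any blossom that arises, and iterate, carefully accounting for which members of $F$ have been consumed so as to maintain rainbow-ness throughout the process.
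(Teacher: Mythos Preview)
The paper itself does not prove Theorem~\ref{combinations}; it is stated as a list of known facts, with parts (1), (2), (4)--(6) attributed to \cite{akz32} and \cite{akzunique}, and part (3) to \cite{ABCHS}. (A sketch of (4) survives in an \texttt{\textbackslash iffalse} block in the source, though not in the compiled paper.) Your reachability argument for (1) and (5) is correct and is the standard forest-growing proof; your contraction reduction for (2) and (6) is the standard one; and your deferral to \cite{ABCHS} for (3) is exactly what the paper does.

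There is, however, a genuine gap in your treatment of (4). You write ``selecting at step $i$ an edge from some path in $H_i$,'' which adds only one new vertex per member of $\ch$ and hence yields the conclusion only under the weaker hypothesis $|\ch|>|Y|$, not $\lVert\ch\rVert>|Y|$. The missing idea is that disjointness of the paths within each $H_i$ lets you add $|H_i|$ new vertices at step $i$: for \emph{each} path $P\in H_i$, take an edge of $P$ leaving the current reachable set. Since the paths in $H_i$ are pairwise vertex-disjoint, these new vertices are distinct, and because they are all leaves of the forest at the moment they are added, no root-to-leaf path acquires two edges from the same $H_i$, so the strongly-rainbow property is preserved. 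This is precisely what the sketch in the paper's source does, and it is what makes (4) a genuine strengthening of (1) rather than a restatement.
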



All these are sharp. For example, the sharpness of (\ref{stdisjoint}) is  shown by the family $\ch$ consisting of the path $sv_1v_2\ldots v_mt$ repeated $m$ times (here $S=\{s\}, T=\{t\}$).



 The proof of fact (F5) 
 uses strongly rainbow alternating paths.  Extending that proof to  general graphs will require proving the following:

 \begin{conjecture}\label{conjpaths}
 Let $G$ be a graph and let $M$ be a matching in $G$. Let  $\ch$ be a family of
 sets of disjoint augmenting $M$-alternating paths in $G$. If $\lVert \ch \rVert >2|M|$ then there exists a strongly rainbow augmenting $M$-alternating path.
 \end{conjecture}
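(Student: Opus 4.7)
The plan is to mimic the proof of fact (\ref{alternatingsingleton}) while replacing its appeal to the singleton rainbow statement Theorem \ref{combinations}(\ref{disjointsimple}) by the disjoint-family strengthening Theorem \ref{combinations}(\ref{stdisjoint}). Concretely, I would build an auxiliary digraph $D$ with source set $S=\{u^{+}:u\text{ is $M$-unsaturated}\}$, sink set $T=\{u^{-}:u\text{ is $M$-unsaturated}\}$, and interior set $Y=\{x[e],\,y[e]:e=xy\in M\}$, so $|Y|=2|M|$. Insert ``mandatory'' arcs $x[e]\to y[e]$ and $y[e]\to x[e]$ for each $e=xy\in M$, together with a pair of ``free'' arcs for each non-$M$ edge of $G$ between the appropriate vertices of $S\cup Y\cup T$. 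An augmenting $M$-alternating path of $G$ from an unsaturated vertex $u$ to an unsaturated vertex $w$ then lifts canonically to a directed $S$-$T$ path in $D$ whose arcs alternate between free and mandatory.

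For each $F\in\ch$ let $\tilde F$ consist of the lifts of the paths of $F$. Since the paths of $F$ are pairwise vertex-disjoint in $G$, and each vertex of $D$ comes from a unique vertex of $G$, their lifts are internally vertex-disjoint in $D$; hence $\tilde\ch=(\tilde F:F\in\ch)$ is a valid input for Theorem \ref{combinations}(\ref{stdisjoint}) with $\lVert\tilde\ch\rVert=\lVert\ch\rVert>2|M|=|Y|$. Applying Theorem \ref{combinations}(\ref{stdisjoint}) to $\tilde\ch$ in $D$ produces a strongly rainbow directed $S$-$T$ path $P^{*}$, each of whose free arcs is chosen from a distinct $\tilde F$. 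Projecting $P^{*}$ back to $G$ (by forgetting the $D$-labels) gives an $M$-alternating walk with $M$-unsaturated endpoints whose non-$M$ edges come from distinct $F\in\ch$; if this walk is a simple $M$-alternating path, it is the desired strongly rainbow augmenting $M$-alternating path.

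The main obstacle is that a directed simple path in $D$ need neither alternate the arc types correctly (so the projection could use two consecutive non-$M$ edges of $G$) nor lift to a simple path of $G$ (the projection could revisit a vertex of $G$; for instance, the $S$- and $T$-endpoints of $P^{*}$ might represent the same unsaturated vertex of $G$, or $P^{*}$ might traverse $x[e]$ and $y[e]$ of some $M$-edge in an order inconsistent with a single alternating traversal, thus tracing out a blossom). I plan to handle both issues simultaneously by selecting $P^{*}$ of shortest length among strongly rainbow $S$-$T$ paths in $D$ and deploying an Edmonds-style shortcutting and blossom-shrinking argument to force correct alternation and simplicity of the projection. The factor $2$ in the hypothesis $\lVert\ch\rVert>2|M|$, as opposed to the bipartite hypothesis $>|M|$ in Theorem \ref{combinations}(\ref{bipartitefamiliesalt}), should be precisely what rules out these bad configurations; making this rigorous is the main technical hurdle of the proof.
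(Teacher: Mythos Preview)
This statement is Conjecture~\ref{conjpaths}, and the paper does \emph{not} prove it: it is presented as an open problem, and the paper establishes only the non-alternating analogue (Theorems~\ref{symmetric} and~\ref{monotone_symmetric}) as ``a possible first step.'' So there is no proof in the paper to compare your attempt against.

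Your proposal is a plan, not a proof, and you yourself flag the gap (``making this rigorous is the main technical hurdle''). That gap is real and is precisely why the statement remains a conjecture. After you apply Theorem~\ref{combinations}(\ref{stdisjoint}) in $D$, nothing prevents the strongly rainbow $S$--$T$ path $P^{*}$ from taking two consecutive free arcs: a vertex $b[f]$ can carry a free in-arc coming from one lifted path (one that traverses the $M$-edge $f$ in the direction $b\to b'$) and simultaneously a free out-arc coming from another lifted path (one that traverses $f$ in the direction $b'\to b$), so the projection of $P^{*}$ need not be $M$-alternating. It may also happen that $in(P^{*})=u^{+}$ and $ter(P^{*})=u^{-}$ for the same unsaturated $u$, producing a closed walk. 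Your heuristic that ``the factor $2$\dots should be precisely what rules out these bad configurations'' is off: in your construction $|Y|=2|M|$, so the hypothesis $\lVert\ch\rVert>2|M|$ is already entirely consumed just to meet the threshold $\lVert\tilde\ch\rVert>|Y|$ of Theorem~\ref{combinations}(\ref{stdisjoint}); no slack remains to drive a shortest-path shortcutting or a blossom-shrinking argument. Note also that even in the singleton case, Theorem~\ref{combinations}(\ref{alternatingsingleton}), the paper explicitly remarks ``no monotonicity claim in this case,'' which already signals that the general-graph alternating problem does not reduce to the directed-path theorems the way the bipartite corollaries (\ref{bipartitefamiliesalt}) do.
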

 
 Let us show how this conjecture implies 
 $(n,n)\to  n-\sqrt{2n}$.
 Let $F_1, \ldots ,F_n$ be matchings of size $n$ in a general graph. Let 
 $M$
 be a maximal size  rainbow matching. We claim that $|M|\ge  n-\sqrt{2n}$. Assume for contradiction that $|M| <  n-\sqrt{2n}$. For every $i \in I$ we have $|F_i| - |F| >\sqrt{2n}$, implying that $M\triangle F_i$ contains a set $H_i$  of $\sqrt{2n}$ $M$-augmenting alternating paths. Let $\ch=(H_i, ~i \in I)$.  Then $\lVert \ch \rVert > 2n >2|M|$, and by the conjecture there exists a strongly rainbow $M$-augmenting path $P$. Then $M \triangle P$ is a strongly rainbow matching larger than $|M|$, a contradiction.  
 \medskip
 
Our second main theorem is the ordinary (non-alternating) path version of Conjecture \ref{conjpaths}
   -- a possible first step.

\begin{theorem}\label{symmetric}
 Let the vertex set of an undirected graph be partitioned into two sets, $S,Y$. Let $\ch$ be a family of sets of paths, each consisting of disjoint  $S-S$ paths.  If $||\ch||>2|Y|$ then there exists a strongly rainbow $S-S$ path.
\end{theorem}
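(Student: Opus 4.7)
The plan is to reduce Theorem~\ref{symmetric} to part~(\ref{stdisjoint}) of Theorem~\ref{combinations} via a random partition of $S$. I choose a uniformly random partition $S = S_1 \sqcup S_2$, placing each vertex of $S$ independently in $S_1$ or in $S_2$ with probability $1/2$. Since every $S$-$S$ path has two distinct endpoints in $S$, any fixed $P \in F \in \ch$ is an $S_1$-$S_2$ path (i.e.\ has one endpoint in $S_1$ and one in $S_2$) with probability exactly $1/2$.

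Let $\ch' = (F' : F \in \ch)$, where $F' \subseteq F$ consists of those $P \in F$ whose endpoints lie on opposite sides of the chosen partition. By linearity of expectation, $\mathbb{E}[\|\ch'\|] = \|\ch\|/2 > |Y|$, so some concrete partition achieves $\|\ch'\| > |Y|$. Each $F'$ inherits vertex-disjointness from $F$, so $\ch'$ is a family of sets of disjoint $S_1$-$S_2$ paths, which I orient from $S_1$ to $S_2$. Since $S_1 \cap S_2 = \emptyset$ and the complement of $S_1 \cup S_2$ in $V(G)$ is exactly the original $Y$, part~(\ref{stdisjoint}) of Theorem~\ref{combinations} applies to $\ch'$ in $G$ and yields a strongly rainbow directed $S_1$-$S_2$ path. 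Forgetting orientation, this is an $S$-$S$ path in $G$, and it is strongly rainbow with respect to the original $\ch$ because the distinct $F' \in \ch'$ that contribute edges arise from distinct $F \in \ch$, and each edge used lies in a path of the corresponding $F$.

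I do not foresee a substantial technical obstacle: the random partition converts the ``$S$-$S$'' problem into the ``$S_1$-$S_2$'' problem at the cost of exactly a factor of $2$ in $\|\ch\|$, which is precisely the gap between the hypothesis $\|\ch\| > 2|Y|$ here and the hypothesis $\|\ch\| > |Y|$ in part~(\ref{stdisjoint}). This alignment of constants also suggests that the bound $2|Y|$ is essentially forced by this strategy, and that any strictly stronger bound would require a genuinely different approach.
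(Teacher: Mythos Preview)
Your argument is correct and is a genuinely different route from the paper's. The paper deduces Theorem~\ref{symmetric} from a stronger, ordered statement (Theorem~\ref{monotone_symmetric}), for which it gives two constructive proofs: the first grows rainbow-monotone trees $T_i(s)$ rooted at each $s\in S$, maintaining the invariant that every vertex of $Y$ lies in at most two of the trees; the second is a more elaborate tree-growing procedure that invokes part~(\ref{monotone_disjoint}) of Theorem~\ref{combinations} only at the end, to handle ``wasted'' paths. By contrast, your random bipartition of $S$ into $S_1\sqcup S_2$ gives a one-line reduction directly to part~(\ref{stdisjoint}), with the factor $2$ in the hypothesis exactly absorbed by the probability $\tfrac12$ that a given $S$--$S$ path becomes an $S_1$--$S_2$ path. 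What the paper's approach buys is the monotone strengthening and an explicit, deterministic construction that may point toward the alternating-paths conjecture (Conjecture~\ref{conjpaths}); what your approach buys is brevity and transparency. Incidentally, your reduction can also recover the monotone version: starting from a sequence of $2|Y|+1$ $S$--$S$ paths, the same averaging gives a subsequence of more than $|Y|$ $S_1$--$S_2$ paths, and part~(\ref{monotone_disjoint}) then yields a rainbow-monotone $S_1$--$S_2$ path, which is rainbow-monotone for the original sequence as well.
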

This is sharp, as shown by the following construction. Let $S=\{u,v\}$, and let $Y=\{y_1,\ldots, y_m\}$. Let $\ch$ 
consist of $2m$ families $\F_1, \ldots ,\F_{2m}$, each $\F_i$ consisting of a  single path $F_i$, where
$F_i=uy_1y_2\ldots ,y_{m}v$ for $i \le m$, and    $F_i=vy_{m}y_{m-1}\ldots ,y_1v$ for $m<i\le 2m$. 

We shall prove a monotonicity version: 

\begin{theorem}\label{monotone_symmetric}
 Let  $S,Y$ be as above, and let $\F$ be a sequence of $2|Y|+1$ $S-S$ directed paths.  Then there exists a  rainbow-monotone $S-S$ directed path.
\end{theorem}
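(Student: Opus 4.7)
Write $\mathcal{F}=(P_1,\ldots,P_N)$ with $N=2|Y|+1$ and set $s_i=\mathrm{in}(P_i),\ t_i=\mathrm{ter}(P_i)$. Since each $P_i$ is a directed $S$-$S$ path, $s_i,t_i\in S$ and the internal vertices lie in $Y$. For each $s\in S$ and each $i\in\{0,\ldots,N\}$ let $R_i(s)\subseteq V$ be the set of vertices $v$ reachable from $s$ by a rainbow-monotone directed walk whose edges come, in order, from an increasing subsequence of $(P_1,\ldots,P_i)$; set $R_0(s)=\{s\}$. Symmetrically define $\bar R_i(t)$ using the tail $(P_i,\ldots,P_N)$. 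The central growth lemma, parallel to the argument behind fact (\ref{monotone_disjoint}), is that processing $P_i$ strictly enlarges $R_i(s_i)$: if $V(P_i)\subseteq R_{i-1}(s_i)$ then $t_i\in R_{i-1}(s_i)$, and since $t_i\neq s_i$ this already supplies a rainbow-monotone $s_i$-to-$t_i$ walk (hence, by excising repeated vertices and splitting at any intermediate $S$-vertex, a rainbow-monotone $S$-$S$ simple path, as in the discussion preceding the theorem); otherwise $P_i$ leaves $R_{i-1}(s_i)$ at some edge, adding at least one new vertex. The analogous statement for $\bar R(t_i)$ holds when the sequence is processed in reverse.

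Assume for contradiction that no rainbow-monotone $S$-$S$ directed path exists. The key structural consequence is that for every split $k\in\{0,\ldots,N\}$ and every pair $s\neq t$ in $S$, the sets $R_k(s)$ and $\bar R_{k+1}(t)$ are disjoint: any common vertex $v$ would concatenate the increasing-index walk $s\to v$ (using paths with index $\leq k$) with the increasing-index walk $v\to t$ (using paths with index $>k$) into a rainbow-monotone $s$-to-$t$ walk, from which a rainbow-monotone $S$-$S$ simple path could be extracted.

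For $|S|=2$ the argument now closes by a direct pigeonhole: among the $2|Y|+1$ paths, some $s\in S$ has $\mathrm{in}(P_i)=s$ for at least $|Y|+1$ values of $i$, and applying fact (\ref{monotone_disjoint}) with $S'=\{s\}$, $T'=S\setminus\{s\}$ to these paths produces the desired rainbow-monotone $S$-$S$ path. The main obstacle, and the heart of the proof, is the case $|S|\geq 3$, where pigeonhole on starting vertices alone no longer suffices. Here one exploits the fact that for every $y\in Y$ the two sets
\[
F_k(y)=\{s\in S:y\in R_k(s)\},\qquad B_{k+1}(y)=\{t\in S:y\in\bar R_{k+1}(t)\}
\]
cannot contain distinct elements $s\in F_k(y),\ t\in B_{k+1}(y)$ with $s\neq t$, so $F_k(y)\subseteq B_{k+1}(y)$ or $B_{k+1}(y)\subseteq F_k(y)$. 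Combined with the growth bounds $\sum_s|R_k(s)|\geq|S|+k$ and $\sum_t|\bar R_{k+1}(t)|\geq|S|+(N-k)$, choosing the split $k$ appropriately (either at $k=|Y|$ or by averaging over all splits, keeping track of the $v\in S$ versus $v\in Y$ contributions) yields the numerical contradiction $N\leq 2|Y|$. The delicate step is verifying that the dichotomy on $F_k(y),B_{k+1}(y)$ gives a sufficiently tight per-vertex bound on $|F_k(y)|+|B_{k+1}(y)|$ to force this contradiction; this is where the factor of $2$ in $2|Y|+1$ enters, matching the sharpness example immediately following Theorem~\ref{symmetric}.
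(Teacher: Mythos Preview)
Your plan for $|S|=2$ is correct: pigeonholing on the start vertex and invoking part~(\ref{monotone_disjoint}) of Theorem~\ref{combinations} works. The gap is in the general case $|S|\ge 3$. The disjointness observation you make is right, and it does imply the dichotomy $F_k(y)\subseteq B_{k+1}(y)$ or $B_{k+1}(y)\subseteq F_k(y)$; in fact it gives more: if both sets are nonempty they are the \emph{same singleton}. But this does not yield the per-vertex bound $|F_k(y)|+|B_{k+1}(y)|\le 2$ that your counting needs. When $F_k(y)=\emptyset$ (i.e.\ $y$ is not forward-reachable from any $s$ using $P_1,\ldots,P_k$), nothing prevents $|B_{k+1}(y)|$ from being as large as $|S|$, and symmetrically when $B_{k+1}(y)=\emptyset$. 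So the inequality $\sum_{y\in Y}\bigl(|F_k(y)|+|B_{k+1}(y)|\bigr)\le 2|Y|$ is not available, and neither the fixed split $k=|Y|$ nor an averaging argument over $k$ recovers it: the full reachability sets $R_k(s)$ are simply too coarse to control how many of them a single $y$ can belong to. You flag this as ``the delicate step'', but it is not a step that can be carried out with the ingredients you have assembled.

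The paper's (first) proof avoids exactly this difficulty by growing, at each step $i$, a \emph{single} edge in a specific rooted tree $T_i(s)$, rather than tracking all of $R_i(s)$. The invariant maintained is precisely ``no $y\in Y$ lies in more than two of the trees $T_i(s)$'', and the case analysis in the inductive step chooses \emph{which} tree to extend so that this invariant survives: if the natural candidate edge would create a third tree through some vertex, the argument routes the new edge to a different tree instead. This active control is what forces the bound $2|Y|$; your sets $R_k(s)$ grow automatically and may overlap across many $s\in S$, so no such control is available.
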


Theorem \ref{monotone_symmetric} implies Theorem \ref{symmetric}. Given a system of sets of disjoint directed paths $\ch=(H_1, \ldots, H_m)$, order $\bigcup \ch$ so that all paths in $H_i$ appear before all paths in $H_j$ whenever $i<j$. A rainbow-monotone path is then clearly strongly rainbow.  

In Section 3 we shall give two proofs for Theorem \ref{monotone_symmetric}. They are quite different, and may point at two possible  proof strategies in the conjectured alternating paths case.

\begin{remark}
 Since the  submission of this paper, the following beautiful results have been proved by Correia,  Pokrovskiy and  Sudakov \cite{cps}:

 \begin{itemize}
     \item[(a)]

$(n,n) \to n-o(n)$,
 
     \item[(b)] $(n,n+o(n)) \to n$.
  \end{itemize}

 Part (b) is easily seen to imply part (a). Interestingly, the authors  prove (a), and give a general probabilistic construction  showing that (a) implies (b). 
\end{remark}

\section{Proof of Theorem \ref{main}}\label{sec:main}

\subsection{Definitions and lemmas}

\begin{definition}
 Two sets of edges, $A,B$  are {\em orthogonal} if  $|a \cap b|=1$ for every $a \in A, b \in B$. 
\end{definition}




The following lemma  is (up to niceties) a well-known fact,  usually expressed as ``there do not exist $n$ mutually orthogonal Latin squares of order $n$". 

\begin{lemma}\label{mols}
Let $M_0, M_1, \ldots ,M_t$ be mutually orthogonal matchings   in an $r$-uniform hypergraph, where $M_0=\{h\}$ and $|M_i|=r$ for $1\le i \le t$. Then $t \le r$.
\end{lemma}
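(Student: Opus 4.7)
The plan is to exploit the orthogonality with the singleton matching $M_0=\{h\}$ in order to index the edges of each $M_i$ by vertices of $h$, and then do a short double-counting inside $V\setminus h$ using the pairwise orthogonality among the $M_i$'s.

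First I would observe that since $|M_i|=r$ and each edge of $M_i$ meets $h$ in exactly one vertex (by orthogonality with $M_0$), the edges of $M_i$ partition $h$: for every $v\in h$ there is a unique edge $e_i^v\in M_i$ containing $v$. Set $S_i^v:=e_i^v\setminus\{v\}$, an $(r-1)$-element subset of $V\setminus h$. The point of this labelling is that once we fix $v$, the sets $S_1^v,\ldots,S_t^v$ are pairwise disjoint: for $i\ne j$, orthogonality forces $|e_i^v\cap e_j^v|=1$, and this common vertex must be $v$, so $S_i^v\cap S_j^v=\emptyset$.

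The main step is then a counting argument. Fix any $j\in\{1,\ldots,t\}$ and any $w\in h\setminus\{v\}$, and look at the $r-1$ non-$h$ vertices of $e_j^w$. For $i\ne j$, orthogonality gives $|e_j^w\cap e_i^v|=1$; the unique common vertex cannot be $v$ (it is not in $e_j^w$, since $e_j^w\cap h=\{w\}$) nor $w$ (it is not in $e_i^v$, since $e_i^v\cap h=\{v\}$), so it lies outside $h$ and hence in $S_i^v$. For $i=j$ the matching property of $M_j$ yields $e_j^w\cap S_j^v=\emptyset$. Summing over $i$, the single edge $e_j^w$ already contains $t-1$ distinct vertices of the disjoint union $\bigcup_{i=1}^t S_i^v\subseteq V\setminus h$, while having only $r-1$ vertices outside $h$ in total. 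Therefore $t-1\le r-1$, i.e., $t\le r$.

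I do not anticipate a real obstacle: once the labelling $e_i^v$ is in place, the whole argument is a one-line inclusion. The only subtlety to check carefully is the claim that the intersection point of $e_j^w$ and $e_i^v$ (for $i\ne j$, $v\ne w$) lies off $h$; this uses both the orthogonality with $M_0$ (which pins down $e_i^v\cap h=\{v\}$ and $e_j^w\cap h=\{w\}$) and the choice $v\ne w$. Everything else — the disjointness of the $S_i^v$ across $i$ and the counting bound on $|e_j^w\cap(V\setminus h)|$ — is immediate.
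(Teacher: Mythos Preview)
Your proof is correct and follows essentially the same pigeonhole idea as the paper's: both pin down $t-1$ edges that share a single anchor vertex and then observe that they must hit a fixed $(r-1)$-element set in $t-1$ distinct points. The only cosmetic difference is the choice of anchor and target --- the paper anchors at a vertex $u\in e\setminus h$ (for some $e\in M_1$) and counts inside $h\setminus e$, whereas you anchor at $v\in h$ and count inside $e_j^{w}\setminus h$.
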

 \begin{proof}


 Let $e\in M_{1}$ and let $u\in e \setminus h$. Since the edges in  $M_i$ meet $e$ at distinct vertices, each 
  $M_i, ~2\le i \le t,$  contains an edge $e_i$ meeting $h$ at $u$. Orthogonality implies that the edges $e_i, ~2\le i \le t,$ meet $h$ at distinct vertices in $h\setminus e$, and hence their number $t-1$ is at most $|h\setminus e|=r-1$.
    \end{proof}

 The proofs of all four parts of Theorem \ref{main} start the same way. 
Let  $\F=(F_1, \ldots ,F_n)$ be a collection of matchings of size $n$ in an $r$-uniform hypergraph (in the last three parts of the theorem $r=2$). 
Let $R$ be a rainbow matching of maximal size, say $q$.
Let $\G$ be the collection  of matchings in $\F$ not represented in $R$. 

Given $G \in \G$ and $e \in R$ we say that $e$ is $G$-{\em wasteful} if either (1) $e$ meets at most $r-1$ edges from $G$ (meaning $e$ is not using its full hitting potential with respect to $G$), or (2) there exists an edge $g \in G$ meeting both $e$ and another edge $f \in R$ (meaning $e$ is not essential for hitting $g$). 
For an edge $e \in R$  let $T(e)$ be the set of matchings $G \in \G$ for which $e$ is  non-$G$-wasteful. By the definition of ``wastefulness'' the following holds:

\vspace{0.3cm}

$(*)~~ G \in T(e)$  if and only if there exist $r$ edges in $G$ intersecting $ e$ and not intersecting any other edge of $R$.
\vspace{0.3cm}

 For a matching $G\in \G$ let 
$T^G$ be the set of edges $e \in R$ such that $G \in T(e)$. Write $t(G)=|T^G|$.

\begin{lemma}\label{punchline}
$|T(e)| \le r$ for every $e \in R$.
\end{lemma}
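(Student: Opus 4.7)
The plan is to apply Lemma~\ref{mols}. Set $M_0=\{e\}$ and, for each $G\in T(e)$, let $M_G$ be the set of $r$ edges guaranteed by $(*)$: the $r$ edges of $G$ that meet $e$ (necessarily at the $r$ distinct vertices of $e$, since $G$ is a matching of an $r$-uniform hypergraph) and that meet no other edge of $R$. Then each $M_G$ is a matching of size $r$, and $M_0$ is orthogonal to every $M_G$ by construction. If I can show that the $M_G$'s are pairwise orthogonal, Lemma~\ref{mols} applied to $M_0$ together with $(M_G)_{G\in T(e)}$ will immediately give $|T(e)|\le r$.

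The heart of the argument is to exploit the maximality of $R$. Fix $G\ne G'$ in $T(e)$ and pick $a\in M_G$, $b\in M_{G'}$ meeting $e$ at distinct vertices; write $a=g^{G}_{v_j}$ and $b=g^{G'}_{v_l}$ with $j\ne l$. I claim $a\cap b\ne\emptyset$: otherwise $(R\setminus\{e\})\cup\{a,b\}$ would be a matching (since $a,b$ avoid $R\setminus\{e\}$ by $(*)$ and, by assumption, avoid each other) and rainbow (since $G,G'$ are distinct matchings from $\G$ and so contribute two new colors absent from $R$), of size $|R|+1$, contradicting maximality. Since $j\ne l$, the shared vertex must lie outside $e$.

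To upgrade ``$a\cap b\ne\emptyset$'' to ``$|a\cap b|=1$'', fix $a=g^{G}_{v_j}$ and consider the $(r-1)$-vertex set $a\setminus\{v_j\}$, which lies entirely outside $e$. For every $l\ne j$ the edge $g^{G'}_{v_l}$ meets this set, by the preceding paragraph, and the $r-1$ edges $\{g^{G'}_{v_l}:l\ne j\}$ are pairwise disjoint, as they sit inside the matching $M_{G'}\subseteq G'$. A disjoint family of $r-1$ edges can contribute at most $r-1$ vertices to an $(r-1)$-element set, so each of them meets $a\setminus\{v_j\}$ in \emph{exactly} one vertex, and together they cover it. This yields the off-diagonal case $|a\cap g^{G'}_{v_l}|=1$ for $l\ne j$, and it also yields the diagonal case $l=j$: the remaining edge $g^{G'}_{v_j}\in M_{G'}$ is disjoint from the other $r-1$ edges of $M_{G'}$, so its non-$v_j$ vertices avoid $a\setminus\{v_j\}$ entirely, forcing $a\cap g^{G'}_{v_j}=\{v_j\}$.

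The step I expect to be the main obstacle is spotting the pigeonhole on the $(r-1)$-set $a\setminus\{v_j\}$; once that is in place, both the off-diagonal and the diagonal parts of orthogonality fall out simultaneously, and Lemma~\ref{mols} closes the proof.
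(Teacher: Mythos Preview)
Your proof is correct and rests on the same two ingredients as the paper's---the maximality of $R$ and Lemma~\ref{mols}---but you run the argument in the opposite direction. The paper assumes $|T(e)|\ge r+1$, invokes Lemma~\ref{mols} to conclude that two of the families $G_e$ fail to be mutually orthogonal, reads this as the existence of a \emph{disjoint} pair $a\in A_e$, $b\in B_e$, and then enlarges $R$ via $(R\setminus\{e\})\cup\{a,b\}$; you instead establish mutual orthogonality of the $M_G$'s up front (using maximality of $R$) and then apply the lemma directly to obtain the bound. Your pigeonhole on the $(r{-}1)$-set $a\setminus\{v_j\}$ is genuine additional work not present in the paper: it upgrades $a\cap b\ne\emptyset$ to $|a\cap b|=1$, so your version explicitly rules out the possibility $|a\cap b|\ge 2$, which the paper's one-line contrapositive passes over in silence.
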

\begin{proof}
 Suppose  there are $r+1$ matchings in $T(e)$. For each $G \in T(e)$ let $G_e$ be the set of edges of $G$ meeting $e$. Then $G_e$ is a matching of size $r$ orthogonal to $e$. By Lemma \ref{mols}
 there exist $A, B \in T(e)$ such 
 that $A_e, B_e$ are  not  cross-intersecting, meaning that there are $a \in A_e$ and $b \in B_e$, such that $a \cap b =\emptyset$. Since $A, B \in T(e)$, $a$ and $b$ do not intersect any edge in $R\setminus \{e\}$. Then  $R\cup \{a,b\}\setminus \{e\}$ is a rainbow matching, and it is larger than $R$, contradicting the maximality of $R$. 
\end{proof}
 Lemma \ref{punchline} implies 
\begin{equation}\label{eq1}
\sum_{e \in R}|T(e)| \le rq.
\end{equation}


\subsection{Parts (1) and (2) of Theorem \ref{main}}
Given $t=t(G)$, the maximal size of $G$ is attained when the edges in   $R \setminus T^G $ are arranged in pairs, so that the two edges in each pair meet at most $2r-1$ distinct edges of $G$, and the last edge, if it exists (namely if $R$ is odd), meets at most $r-1$  additional  edges of $G$.

 Since $\bigcup R $ is a cover for $G$, this implies
$$n=|G|  \le rt +   (q-t)(r-\frac{1}{2})= qr-\frac{1}{2}(q-t).$$

So, $t \ge 2n-2qr+q$. 
Summing up over all $G \in \G$ and changing the order of summation, we get 
$$ \sum_{e \in R}|T(e)| =\sum_{G \in \G} |T^G| \ge  (2n-q(2r-1))|\G|=(2n-q(2r-1))(n-q). $$

Combining this with (\ref{eq1}) yields
\begin{equation}\label{eqq}
   (2n-q(2r-1))(n-q) \le rq.
\end{equation}
 
\begin{claim}\label{claimq}
 $q> \frac{n-\frac{1}{2}-\frac{3}{4r-6}}{r-\frac{1}{2}}$.
\end{claim}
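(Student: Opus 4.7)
The plan is to treat the quadratic inequality (\ref{eqq}) as the main constraint and show that substituting $q_0 := \frac{n - \frac{1}{2} - \frac{3}{4r-6}}{r - \frac{1}{2}}$ strictly violates it; this will force the actual $q$ to be larger than $q_0$. The point of the particular form of $q_0$ is that it is chosen to make all $n$-dependent contributions cancel in the difference of the two sides of (\ref{eqq}), leaving a positive constant.

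First I would set $b = 2r - 3$, so that $2r - 1 = b + 2$ and $r = (b+3)/2$; with this change of variable $q_0$ simplifies to $q_0 = (2n - 1 - 3/b)/(b+2)$. A direct computation then gives
\[
2n - (2r-1)q_0 = 1 + \tfrac{3}{b}, \qquad n - q_0 = \tfrac{bn + 1 + 3/b}{b+2}, \qquad r q_0 = \tfrac{(b+3)(2bn - b - 3)}{2b(b+2)}.
\]
Plugging these into the two sides of (\ref{eqq}) and simplifying, the $n$-terms cancel and I expect the residual to factor cleanly as $(b+3)^2/(2b^2)$, which is strictly positive for all $r \ge 2$. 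Thus $(2n - (2r-1)q_0)(n - q_0) > r q_0$, i.e.\ the inequality (\ref{eqq}) is violated at $q = q_0$.

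To promote this to the bound $q > q_0$, I would consider $\phi(q) := (2n - (2r-1)q)(n-q) - rq$, which is an upward-opening quadratic (leading coefficient $2r-1 > 0$). The set $\{q : \phi(q) \le 0\}$ is therefore an interval $[q_-, q_+]$. Since $\phi(0) = 2n^2 > 0$ and $\phi(n) = -rn < 0$, we have $q_- \le n \le q_+$. Since also $q_0 < n$ (trivially, for $r \ge 2$) and $\phi(q_0) > 0$, the point $q_0$ lies to the left of the interval, i.e.\ $q_0 < q_-$. By (\ref{eqq}) the true $q$ lies in $[q_-, q_+]$, giving $q \ge q_- > q_0$, as required.

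The main obstacle is the algebraic cancellation in the second step: the statement is essentially a tight quadratic bound, so the bookkeeping has to be done carefully to confirm that the coefficient of $n$ vanishes and that the remaining constant is positive. Once that is verified, the monotonicity/convexity argument to deduce $q > q_0$ from $\phi(q_0) > 0$ is routine.
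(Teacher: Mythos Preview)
Your proposal is correct, and the algebra you flag as the ``main obstacle'' does go through exactly as you predict: with $b=2r-3$ one gets
\[
\phi(q_0)=\frac{(b+3)(b^2n+b+3)}{b^2(b+2)}-\frac{(b+3)(2bn-b-3)}{2b(b+2)}
=\frac{(b+3)\bigl[2(b^2n+b+3)-b(2bn-b-3)\bigr]}{2b^2(b+2)}
=\frac{(b+3)^2}{2b^2}>0,
\]
the $n$-terms cancelling as claimed. The convexity argument locating $q_0$ to the left of the smaller root is clean and complete.

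This is a genuinely different route from the paper's. The paper rearranges (\ref{eqq}) into $(2r-1)q^2-(n+2rn+r)q+2n^2\le 0$, writes the smaller root $q_1$ via the quadratic formula, and then bounds the discriminant from above by completing the square, so that $\sqrt{\Delta}<n(2r-3)+r+2+\frac{6}{2r-3}$; plugging this back into the formula for $q_1$ yields the stated lower bound. Your approach bypasses the square root entirely: instead of estimating $q_1$ you evaluate $\phi$ at the target $q_0$ and use the sign pattern of the quadratic. This is shorter and more transparent (one sees directly that the bound is not tight by the exact amount $(b+3)^2/(2b^2)$), whereas the paper's derivation makes it clearer \emph{how} the particular form of $q_0$ was discovered. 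Either is fine.
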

\begin{proof}
Rearranging (\ref{eqq}), we get $$(2r-1)q^2 -(n+2rn+r)q + 2n^2\le 0.$$
Solving for $q$, we have $q\ge q_1$, where 
\begin{equation}\label{square}
q_1 = \frac{n+2rn+r-\sqrt{(n+2rn+r)^2-8(2r-1)n^2}}{2(2r-1)}.
\end{equation}

Let 
\begin{equation*}
    \begin{split}\Delta &= (n+2rn+r)^2-8(2r-1)n^2 \\&=  n^2(4r^2-12r+9)+n(4r^2+2r)+r^2
     \end{split}
\end{equation*}
    be the discriminant in (\ref{square}). Completing $\Delta$ to a square entails $$\Delta < n^2(2r-3)^2+2n(2r^2+r)+\Big(\frac{2r^2+r}{2r-3}\Big)^2,$$ implying that $$\sqrt{\Delta}<  n(2r-3) + \frac{2r^2+r}{2r-3}=n(2r-3)+r+2 +\frac{6}{2r-3}.$$ 

Thus 
\begin{equation*}
    \begin{split}
    q_1 &= \frac{n+2rn+r-\sqrt{\Delta}}{2(2r-1)} \\ &>
    \frac{n+2rn+r-n(2r-3)-r-2 -\frac{6}{2r-3}}{2(2r-1)} \\ &=\frac{n-\frac{1}{2}-\frac{3}{4r-6}}{r-\frac{1}{2}}.    
    \end{split}
\end{equation*}
This shows  
\begin{equation}\label{eq4}
    q> \frac{n-\frac{1}{2}-\frac{3}{4r-6}}{r-\frac{1}{2}},
\end{equation}

\end{proof}

The claim  entails Part  (2) of Theorem \ref{main}. 
Part (1) follows upon plugging in  $r=2$.

\medskip

\subsection{More definitions}
For the proofs of parts (3) and (4) of Theorem \ref{main} we  need additional definitions. 
For a matching $G\in \G$, we say that a pair of edges  $\{e,f\}$ in $R$ is  {\em half-$G$-wasteful} if there are three edges $g_e,g_f,g_{ef}\in G$ satisfying the  following: 
\begin{itemize}
    \item 
    $g_e$ intersects $e$ and no other edge in $R$, 
    \item
    $g_f$ intersects $f$ and no other edge in $R$, and
    \item $g_{ef}$ intersects both $e$ and $f$.
\end{itemize}
 We also say then that each of $e$ and $f$ are {\em half-$G$-wasteful}.
 Denote by $HW(e)$  the set of all $G \in \G$ for which the edge $e\in R$ is  half-$G$-wasteful. Let $W^G$ be the set of edges $e \in R$ such that $G \in HW(e)$, and write $w=|W^G|$.

Let $B$ be the bipartite graph with respective sides  $\G$ and $R$, in which   $G\in \G$ is adjacent to  $e\in R$ if and only if $e\in HW(G)$. Then 
\begin{equation}\label{eqB}
   \sum_{e \in R}|HW(e)|= \sum_{e \in R}deg_B(e) = |E(B)|.  
\end{equation}
Let $N_B(e)$ be the neighborhood of $e\in R$ in the graph $B$. 

The maximal size of $G\in \G$ is attained when the edges in  $R \setminus (T^G\cup W^G)$ are arranged in triples, so that the three edges in each triple meet at most four distinct edges of $G$.
Since $\bigcup R$ is a cover of every $G \in \G$, we have 
$$n=|G| \le 2t + \frac{3}{2}w + \frac{4}{3}(q-t-w), $$ implying 
$3n-4q \le 2t+\frac{w}{2}$. 
Summing over all $G\in \G$ we obtain
\begin{equation}\label{eqonehand}
\begin{split}
2\sum_{e \in R}|T(e)| + \frac{1}{2}\sum_{e\in R} |HW(e)| &=\sum_{G \in \G} (2|T^G|+ \frac{1}{2}|W^G|) \\& \ge (3n-4q)|\G| \ge (3n-4q)(n-q).
\end{split}
\end{equation}
\medskip

\subsection{Part (3) of Theorem \ref{main}}
Assume the conditions of Part (3) of Theorem \ref{main} hold, namely that there is no $C_3$ or $C_5$ in $\bigcup_{i=1}^n F_i$.

\begin{lemma}\label{counting}
$|T(e)|=1$, hence $\sum_{e \in R}|T(e)| \le q$.
\end{lemma}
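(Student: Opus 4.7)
The plan is to show the contrapositive: if $|T(e)| \ge 2$ for some $e \in R$, then the union $\bigcup_i F_i$ contains a triangle. Since the hypothesis of Part (3) forbids $C_3$, this gives $|T(e)| \le 1$, and summing over $e \in R$ yields the stated inequality.

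Write $e = \{u,v\}$ and assume toward contradiction that $A, B \in T(e)$ are two distinct matchings of $\mathcal{G}$. Applying $(*)$ with $r=2$, I would identify edges
\[
a_u = \{u, u_A\},\ a_v = \{v, v_A\} \in A, \qquad b_u = \{u, u_B\},\ b_v = \{v, v_B\} \in B,
\]
each intersecting $e$ but no other edge of $R$. A preliminary observation is that $e \notin A$ (otherwise $e$ would be the \emph{only} edge of $A$ meeting $e$, contradicting $(*)$), and similarly $e \notin B$. Combining this with the fact that $u_A, v_A, u_B, v_B$ avoid $V(R \setminus \{e\})$, I get the stronger conclusion $u_A, v_A, u_B, v_B \notin V(R)$.

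Next I would attempt the swap $R' = (R \setminus \{e\}) \cup \{a_u, b_v\}$. Since $a_u \in A$, $b_v \in B$, and $A,B$ are not represented in $R$, the colors are all distinct. Moreover $R \setminus \{e\}$ is a matching avoiding $u$ and $v$, and $u_A, v_B \notin V(R)$, so $R'$ is a matching provided $a_u$ and $b_v$ are disjoint—that is, provided $u_A \ne v_B$. If $u_A \ne v_B$, then $R'$ is a rainbow matching of size $q+1$, contradicting the maximality of $R$. Hence $u_A = v_B$; call this common vertex $x$.

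But now the three edges $e = \{u,v\}$, $a_u = \{u,x\}$, and $b_v = \{v,x\}$ all lie in $\bigcup_i F_i$, and $x \notin \{u,v\}$ (since $x = u_A \notin V(R)$). These three edges form a $C_3$, contradicting the hypothesis that $\bigcup_i F_i$ contains no triangle. Thus $|T(e)| \le 1$, and $\sum_{e \in R}|T(e)| \le q$. The only potentially delicate step is justifying $u_A, v_A, u_B, v_B \notin V(R)$ (ruling out the degenerate possibility $u_A = v$, i.e. $a_u = e$), but this is handled cleanly by the matching condition on $A$ combined with $(*)$; no $C_5$-freeness is needed at this stage.
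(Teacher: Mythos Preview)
Your proof is correct and follows essentially the same approach as the paper. Both arguments take two matchings $A,B\in T(e)$, select one edge from each meeting $e$ at opposite endpoints, and use triangle-freeness to conclude these two edges are disjoint, yielding a rainbow matching larger than $R$; your version simply spells out a few routine details (e.g., $e\notin A$ and $u_A\notin V(R)$) that the paper leaves implicit.
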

\begin{proof}
Assume $G,H \in T(e)$. Let $g,g'\in G$ and  $h,h'\in H$ be edges intersecting $e$ and not any other edge in $R$, such that $g,h$ meet at the same vertex of $e$. Since the graph contains no triangles, the set $(R\setminus {e}) \cup \{g,h'\}$ is a rainbow matching of size $q+1$, contradicting the maximality of $R$. 
\end{proof}

\begin{lemma}\label{sameside}
Let $f\in R$ and $g_e, g_f, g_{ef} \in G$ be edges witnessing the fact that $G \in HW(e)$. Suppose $G'\in \G$, $G\neq G'$, and there exists $g'_e\in G'$ intersecting $e$ in one vertex and not intersecting any other edge in $R$.  Then $g_e \cap g'_e \cap e  \neq \emptyset$. 
\end{lemma}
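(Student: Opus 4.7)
The plan is to argue by contradiction and, from the failure of the conclusion, build a rainbow matching strictly larger than $R$, using only the $C_3$-freeness assumption. Suppose $g_e$ and $g'_e$ meet $e$ at distinct endpoints; write $e=\{u,v\}$ with $u\in g_e$ and $v\in g'_e$. The first step is to show that $g_e$ and $g'_e$ are vertex-disjoint: if they shared a vertex $z\notin e$, then the three edges $e$, $g_e$, $g'_e$ would form a triangle on $\{u,v,z\}$, contradicting the hypothesis that $\bigcup_{i}F_i$ is $C_3$-free.

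With vertex-disjointness in hand, set $R^{*}=(R\setminus\{e\})\cup\{g_e,g'_e\}$. I claim $R^{*}$ is a rainbow matching of size $q+1$, contradicting the maximality of $R$. For the matching property: by the half-wastefulness witness the edge $g_e$ meets $\bigcup R$ only at $e$, and by hypothesis the same holds for $g'_e$; hence neither touches any edge of $R\setminus\{e\}$, while $\{g_e,g'_e\}$ itself is disjoint by the previous step. For the rainbow property: the edges of $R\setminus\{e\}$ keep their original distinct colours, $g_e$ carries colour $G$ and $g'_e$ carries colour $G'$; both $G$ and $G'$ lie in $\G$ (so neither colour is used elsewhere in $R$), and $G\ne G'$ so they differ from each other.

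The only conceptual hurdle is noticing that the full strength of the half-wastefulness hypothesis is not needed here: only the single property ``$g_e$ meets $R$ exclusively at $e$'' is used, with the auxiliary witnesses $g_f$, $g_{ef}$ and the $C_5$-free assumption playing no role (they will presumably be invoked elsewhere in the proof of Part~(3)). Once this is recognised, the entire argument reduces to the two-edge augmentation $e\mapsto\{g_e,g'_e\}$ combined with triangle avoidance.
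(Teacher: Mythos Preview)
Your proof is correct and follows exactly the paper's approach: assume $g_e$ and $g'_e$ meet $e$ at different vertices, use $C_3$-freeness to conclude they are disjoint, and then $(R\setminus\{e\})\cup\{g_e,g'_e\}$ is a larger rainbow matching. Your write-up is simply a more detailed version of the paper's one-line argument, and your observation that only triangle-freeness (not $C_5$-freeness or the witnesses $g_f,g_{ef}$) is needed here is accurate.
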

\begin{proof}
We have to show that $g_e$ and $g'_e$ meet $e$ at the same vertex. If not, then  since the graph contains no triangles,  the set $(R\setminus {e}) \cup \{g_e,g'_e\}$ is a  rainbow matching of size $q+1$,  contradicting the maximality of $R$. 
\end{proof}

\begin{lemma}\label{sumdegrees}
$\sum_{e \in R}|HW(e)| \le 2q$.
\end{lemma}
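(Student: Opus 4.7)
The plan is to establish the uniform bound $|HW(e)| \le 2$ for every $e \in R$, from which $\sum_{e \in R}|HW(e)| \le 2q$ follows immediately upon summing over $R$.

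First I would verify that every $G \in HW(e)$ has a well-defined unique partner $f_G$: otherwise two distinct witnessing edges $g_{ef}^G, g_{ef'}^G$ of the matching $G$ would both be incident to $e$ at the endpoint opposite $v_e$, forcing them to coincide as a single $G$-edge; their common other endpoint would then lie on both of the disjoint $R$-edges $f, f'$, which is impossible. Assume now for contradiction that $G_1, G_2, G_3 \in HW(e)$ are three distinct matchings with unique partners $f_1, f_2, f_3$. By Lemma \ref{sameside}, the three edges $g_e^i$ meet $e$ at a common endpoint $v_e$, and by the matching property each $g_{ef_i}^i$ is incident to the other endpoint $u_e$.

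In the principal case --- at least two of the partners coincide, say $f_1 = f_2 = f$ --- a second application of Lemma \ref{sameside} at $f$ shows that $g_f^1, g_f^2$ share a common endpoint $v_f$ on $f$, and that the edges $g_{ef}^1, g_{ef}^2$ coincide as the single graph edge $u_e u_f$. I would then propose the swap $R' = R \setminus \{e, f\} \cup \{g_{ef}^2, g_f^1, g_e^3\}$ (the symmetric variant is used when $f_3 = f$). The three added edges carry distinct colors $G_1, G_2, G_3$ and each meets $R$ only in the removed edges $e, f$; the only way two of them can share a vertex is if the free endpoint $z_1$ of $g_f^1$ equals the free endpoint $w_3$ of $g_e^3$, which would produce the 5-cycle $v_e - w_3 - v_f - u_f - u_e - v_e$ through the edges $g_e^3, g_f^1, f, g_{ef}^2, e$, contradicting the $C_5$-free hypothesis. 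Hence $R'$ is a rainbow matching of size $q+1$, contradicting the maximality of $R$.

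The main obstacle is the remaining sub-case in which all three partners $f_1, f_2, f_3$ are pairwise distinct: here the graph edge $u_e u_{f_i}$ lies in only one matching $G_i$, so the clean three-edge swap above is unavailable. I expect that one completes the argument in this sub-case either by an analogous, larger swap --- removing $\{e, f_1, f_2\}$ and substituting a carefully-chosen mix of edges drawn from $G_1, G_2, G_3$ (and possibly a fourth matching in $\G$) with distinct colors for a net gain --- or by extracting, from the combined structure at the three $f_i$'s via repeated application of Lemma \ref{sameside}, a forced coincidence of free endpoints $z_i = w_j$ that closes up into a $C_5$ through $e, f_j$ and the corresponding witnessing edges. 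In either route the $C_5$-free hypothesis is the decisive ingredient.
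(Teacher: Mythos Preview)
Your ``principal case'' is correct and is exactly the paper's Claim~\ref{C4}: if two matchings $G_1,G_2\in HW(e)$ share the same partner $f$, then $\deg_B(e)\le 2$. Your swap $R\setminus\{e,f\}\cup\{g_f^1,g_{ef}^2,g_e^3\}$ and the $C_5$ check are the same as in the paper.

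The genuine gap is the ``remaining sub-case''. The uniform bound $|HW(e)|\le 2$ that you are trying to prove is \emph{not} what the paper establishes, and the structure of the paper's argument indicates it need not hold. When $\deg_B(e)\ge 3$ and the partners $f_1,f_2,f_3$ are pairwise distinct, neither of the two routes you sketch can succeed. A larger swap cannot work because all crossing edges $g_{ef_i}^i=u_eu_{f_i}$ pass through the single vertex $u_e$, so no two of them can be inserted together; and the only edge known to hit $f_i$ alone is $g_{f_i}^i$, which carries the \emph{same} colour $G_i$ as $g_{ef_i}^i$, so the ``three for two'' replacement never gets three distinct colours. For the same reason no coincidence $z_i=w_j$ is forced: the witnessing edges at the different $f_i$'s involve disjoint pieces of the graph, and nothing pins their free endpoints together.

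The paper does not exclude this configuration; it accommodates it via a charging argument. It shows that if $\deg_B(e)\ge 3$ then (i) the partners $f_1,\dots,f_d$ are all distinct (this is exactly your principal case, restated), (ii) each partner satisfies $\deg_B(f_i)=1$, and (iii) the partner-sets $S(e)=\{f_1,\dots,f_d\}$ are pairwise disjoint as $e$ ranges over high-degree edges. Parts (ii) and (iii) are proved by swaps and $C_5$-exclusions of the same flavour as your principal case, but applied at $f_i$ rather than at $e$. Together they give $\sum_{e:\deg_B(e)\ge 3}\deg_B(e)\le |\{f:\deg_B(f)=1\}|$, and a short count then yields $|E(B)|\le 2q$. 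So the missing idea is not a stronger local contradiction at $e$, but a global double-counting that offsets each high-degree edge against many degree-one partners.
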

\begin{proof}
 By (\ref{eqB}) it is enough to show $|E(B)|\le 2q$.

\begin{claim}\label{C4}
Let $G_1,G_2\in \G$ and $e,f\in R$. If $(e,f)$ is a half-wasteful pair for both $G_1,G_2$ then $deg_B(e)=2$.  
\end{claim}
{\em Proof of the claim.} Clearly, $deg_B(e)\ge 2$ since $G_1,G_2\in N_B(e)$. Assume to the contrary that $N_B(e)$ contains $G_3\in \G$ so that $G_3\neq G_1,G_2$. Then by Lemma \ref{sameside} the edges  in $G_1,G_2,G_3$ that meet only $e$ intersect at the same vertex of $e$. Since the graph does not contain $C_3$, this implies that the edges in $G_1,G_2$ that meat both $e$ and $f$ coincide. 

Let $a\in G_1$ be an edge meeting only $f$ in $R$, let $b \in G_2$ be an edge meeting  $e,f$ in $R$, and let $c \in G_3$ be an edge meeting only $e$ in $R$. Note that $a,c$ do not intersect since the graph contains no $C_5$. Replacing $e,f$ in $R$ with $a,b,c$  we obtain a  rainbow matching of size $q+1$, a contradiction. 
\hfill $\diamondsuit$
\medskip

Let $D =\{e\in R \mid deg_B(e) \ge 3\}$. 
\begin{claim}\label{degrees}
For every $e\in D$ there exists a subset $S(e) \subseteq R$ with the following properties:
\begin{enumerate}
    \item $|S(e)|=deg_B(e)$.
    \item For every $f\in S(e)$, $deg_B(f)=1$.  
    \item $S(e) \cap S(f) =\emptyset$ whenever  $e,f \in D$ and $e\neq f$. 
\end{enumerate}
\end{claim}
{\em Proof of the claim.} Let $e\in D$ and write $d=deg_B(e)$. Let $N_B(e)=\{G_1,\dots, G_d\}$, and let $f_1,\dots,f_d \in R$ be edges such that $(e,f_i)$ is a half-$G_i$-wasteful pair. Let $S(e)=\{f_1,\dots,f_d\}$. 

To prove (1), we have to show that $f_i\neq f_j$ if $i\neq j$. Indeed, if $f:=f_i=f_j$ then $(e,f)$ is a half-wasteful pair for both $G_i,G_j$, implying by Claim \ref{C4} that $\deg_B(e)=2$, contradicting the fact that $e\in D$.  

To prove (2), we show that $deg_B(f_i)=1$ for every $i$. First, $G_i$ is adjacent to $f_i$ in $B$, showing $deg_B(f_i)\ge 1$. Assume for contradiction that there exists $G \in \G$, $G\neq G_i$, so that $f_i$ is also a half-$G$-wasteful edge. Let $j\in [d]$ so that $G_j \neq G_i, G$ (such $j$ exists since $d\ge 3$). Let $a\in G_i$, $b\in G_j$ be edges meeting only $e$ in $R$. Let $c\in G_i$ be an edge meeting both $e,f_i$. Let $d \in G_i$ 
and $g\in G$ be edges meeting only $f$ in $R$. Then by Lemma \ref{sameside}, $a,b$ meet $e$ at the same vertex and $d,g$ meet $f$ at the same vertex. Since the graph contains no $C_3$, this implies that $b\cap c= c\cap g= \emptyset$. 
Since the graph contains no $C_5$, we have also $b\cap g= \emptyset$, implying that $(R\setminus \{e,f\})\cup \{b,c,g\}$ is a rainbow matching of size $q+1$, a contradiction.

To prove (3), let $e\neq f\in D$ and suppose $h\in S(e) \cap S(f)$.  
Let $G_1 \in HW(e,h)$ and $G_2 \in HW(f,h)$. Since $G_1$ is a matching $G_1\neq G_2$. Let $G_3 \in N_B(e)$ so that $G_3\neq G_1, G_2$ (it exists because $deg_B(e)\ge 3$). 
By Lemma \ref{sameside}, the edges in $G_1,G_2$ meeting only $h$ intersect at the same vertex of $h$.  
Let $b\in G_2$ be an edge meeting only $h$ in $R$, let $a\in G_1$ be an edge meeting $e,h$, and let $c\in G_3$ be an edge meeting only $e$ in $R$. Again, $a\cap b = a\cap c=\emptyset$ for otherwise we have a $C_3$, and $b\cap c=\emptyset$ for otherwise we have a $C_5$. Then like before, replacing $e,h$ in $R$ by $a,b,c$ results with a larger rainbow matching, a contradiction.
\hfill $\diamondsuit$
\medskip

For $i=1,2$ let $U_i=\{e\in R \mid deg_B(e) =i\}$, and let $U_3=D=R\setminus (U_1\cup U_2)$. For $i\in[3]$ let $E_i$ be the set of edges in $B$ adjacent to a vertex in $U_i$.  

By Claim \ref{degrees} $|E_3| \le |U_1|=|E_1|$. Therefore,  $$|E(B)| = \sum_{i=1}^{3} |E_i| \le 2|E_1| + |E_2| \le 2\sum_{i=1}^{3} |U_i| \le 2q.$$ This completes the proof of the lemma.
\end{proof}

Combining (\ref{eqonehand}) with   Lemmas \ref{counting} and  \ref{sumdegrees} we get 
 $$ (3n-4q)(n-q) \le 3q,$$  entailing (using similar calculations to those in Claim \ref{claimq}) $q> \frac{3n}{4}-\frac{9}{4}$. This completes the proof of Part (3) of the theorem. 

\subsection{Part (4) of Theorem \ref{main}}
Assume the conditions of Part (4) of  Theorem \ref{main} hold. That is, the matchings $F_1,\dots,F_n$ are pairwise disjoint. 

For $e\in R$, let $E_e$ be the set of edges $g\in \bigcup HW(e)$ intersecting $e$ and no other edge in $R$.

\begin{lemma}\label{sameside1}
Suppose  $e\in R$ has $deg_B(e)\ge 3$. Then all the edges in $E_e$ intersect $e$ at the same vertex (i.e., $\bigcap E_e \cap e \neq \emptyset$). 
\end{lemma}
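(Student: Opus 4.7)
The plan is to argue by contradiction: suppose there exist $g, g' \in E_e$ meeting $e = uv$ at different vertices of $e$, say $g \ni u$ and $g' \ni v$, and derive a rainbow matching of size $q+1$, contradicting the maximality of $R$. A preliminary observation I would record is that each matching $G \in HW(e)$ contributes exactly one edge to $E_e$, namely the witness $g_e$, because the only other edge of $G$ touching $e$ is $g_{ef}$, which also touches the partner $f \in R$ and so is excluded from $E_e$. Thus distinct elements of $E_e$ come from distinct matchings in $HW(e)$, and the hypothesis $\deg_B(e) \ge 3$ supplies three distinct edges $g_1, g_2, g_3 \in E_e$ lying in three distinct matchings $G_1, G_2, G_3 \in \G$.

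With $g, g' \in E_e$ on opposite endpoints of $e$, I would case-split on whether $g \cap g' = \emptyset$. In the easy case, the set $(R \setminus \{e\}) \cup \{g, g'\}$ is a matching (since $g, g'$ avoid every edge of $R$ other than $e$) and is rainbow (since $g, g'$ lie in two previously unrepresented matchings from $\G$), giving a rainbow matching of size $q+1$. The obstructive case is $g \cap g' \neq \emptyset$, which forces $g = uw$ and $g' = vw$ for some common vertex $w \notin e$; note that the triangle $uvw$ is no longer forbidden here, since Part (4) only assumes disjointness of the matchings.

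The main obstacle is resolving this triangular configuration, and this is where pairwise disjointness of the matchings is invoked. I would use the third edge $g_3 \in E_e$, which comes from a matching $G_3$ distinct from those producing $g$ and $g'$. By symmetry, $g_3$ meets $e$ at either $u$ or $v$; suppose $g_3 \ni u$ and write $g_3 = uw_3$. Pairing $g_3$ with $g' = vw$ and replaying the previous case split: either they are disjoint and we again obtain a rainbow matching of size $q+1$, or $g_3$ must share a vertex with $g'$; since $g_3 \cap e = \{u\}$ excludes $v$, this forces $w_3 = w$, so that $g_3 = uw = g$ as edges. But then $g$ and $g_3$ lie in two distinct matchings among $F_1, \ldots, F_n$, contradicting the hypothesis of Part (4) that these matchings are pairwise edge-disjoint. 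The symmetric subcase $g_3 \ni v$ is handled identically by pairing $g_3$ with $g$, completing the argument.
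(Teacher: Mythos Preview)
Your proof is correct and follows essentially the same approach as the paper's: both arguments produce three edges of $E_e$ from three distinct matchings (your preliminary observation that each $G\in HW(e)$ contributes exactly one edge to $E_e$ is implicit in the paper), and then use edge-disjointness of the matchings to show that two of the three edges on opposite endpoints of $e$ must be vertex-disjoint, yielding a rainbow matching of size $q+1$. The only difference is cosmetic: the paper first groups the two edges landing on the same endpoint and argues the third cannot meet both, whereas you start with two edges on opposite endpoints and bring in the third to resolve the triangular configuration.
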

\begin{proof}
Write $e=uv$. Let $G_1,G_2,G_3$ be three distinct matchings in $HW(e)$, and let $g_i\in G_i$ be an edge intersecting $e$ and no other edges in $R$. It is enough to show that $g_1,g_2,g_3$ meet $e$ at the same vertex. Assume $g_1,g_2$ meet $e$ at $v$, and $g_3$ meets $e$ at $u$. Since $G_1,G_2$ are disjoint, $g_1 \neq g_2$, and thus $g_3$ cannot meet both $g_1$ and $g_2$. Say $g_1\cap g_3=\emptyset$. Then  the set $(R\setminus {e}) \cup \{g_1,g_3\}$ is a  rainbow matching of size $q+1$, contradicting the maximality of $R$. 
\end{proof}

Let $D =\{e\in R \mid deg_B(e) \ge 5\}$. 
\begin{lemma}\label{degrees1}
For every $e\in D$ there exists a subset $S(e) \subseteq R$ with the following properties:
\begin{enumerate}
    \item $|S(e)|\ge \frac{deg_B(e)}{2}$.
    \item For every $f\in S(e)$, $deg_B(f)\le 2$.  
    \item If   $e,f \in D$ and $e\neq f$, then $S(e) \cap S(f) =\emptyset$. 
\end{enumerate}
\end{lemma}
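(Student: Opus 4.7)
The plan is to follow the skeleton of Claim \ref{degrees}, replacing each use of $C_3$- or $C_5$-freeness with an argument based on the edge-disjointness of the matchings $F_1,\dots,F_n$, combined with the alignment given by Lemma \ref{sameside1}.

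Setup and the witness map. Fix $e\in D$ with $d=\deg_B(e)\ge 5$. Lemma \ref{sameside1} yields a vertex $u\in e$ such that every $g_e^G$ ($G\in HW(e)$) meets $e$ at $u$; write $e=uv$. In each $G\in HW(e)$, the vertex $v$ is covered by a (unique) edge $g^G\in G$, necessarily distinct from $g_e^G$, and its other endpoint $x_G$ lies on a uniquely determined $R$-edge $f_G\neq e$, the witness for $G$. Edge-disjointness makes the edges $vx_G$ pairwise distinct, so $G\mapsto x_G$ is injective, and each $f\in R\setminus\{e\}$ arises at most twice as some $f_G$. Writing $\phi(G)=f_G$ and $T_e=\phi(HW(e))$, I split $T_e$ as $T_e^{\le 2}\cup T_e^{\ge 3}$ according to whether $\deg_B(f)\le 2$ or $\ge 3$.

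A sharper fiber bound at high-degree witnesses. I claim $|\phi^{-1}(f)|\le 1$ whenever $\deg_B(f)\ge 3$. If $G_1,G_2$ were two distinct matchings in $\phi^{-1}(f)$, the edges $vx_{G_1},vx_{G_2}$ would land on the two distinct vertices of $f=wx$, and the partner edges $g_f^{G_i}$ (coming from the half-wasteful triple for each $G_i$) would then cover the remaining vertex of $f$: $g_f^{G_1}$ covers $w$ while $g_f^{G_2}$ covers $x$. But Lemma \ref{sameside1} applied at $f$ (using $\deg_B(f)\ge 3$) forces every $g_f^H$ ($H\in HW(f)$) to meet $f$ at a common vertex, a contradiction. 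Hence $d=|\phi^{-1}(T_e^{\le 2})|+|\phi^{-1}(T_e^{\ge 3})|\le 2|T_e^{\le 2}|+|T_e^{\ge 3}|$.

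Elimination of high-degree witnesses. Set $S(e):=T_e^{\le 2}$; properties (1) and (2) then both follow once I show $T_e^{\ge 3}=\emptyset$ (which gives $|S(e)|\ge d/2$). Suppose for contradiction $f^*\in T_e^{\ge 3}$, with unique $G_0\in\phi^{-1}(f^*)$. Pick $H_1,H_2\in HW(f^*)\setminus\{G_0\}$, available by $\deg_B(f^*)\ge 3$; and using the abundance $d\ge 5$ together with the fiber bounds, choose $G'\in HW(e)\setminus\{G_0\}$ whose witness $f_{G'}$ avoids $\{f^*,f^*_{H_1},f^*_{H_2}\}$. A rainbow matching of size $q+1$ is then constructed by removing $e$ together with the (at most three) $R$-edges affected by the swap and inserting carefully chosen edges from $G_0,G',H_1,H_2$; edge-disjointness of the matchings forces distinct $V(R)$-endpoints among the inserted edges, while the only potential collisions are at endpoints outside $V(R)$, which can be avoided by switching $G'$ to an alternative candidate in $HW(e)$. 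This is the main obstacle: the explicit augmenting construction requires careful case analysis, but the threshold $d\ge 5$ is precisely what provides enough flexibility in the choice of $G'$. Property (3) -- disjointness of $S(e)$ and $S(e')$ for distinct $e,e'\in D$ -- follows by an analogous argument: if $h\in S(e)\cap S(e')$ then $\deg_B(h)\le 2$ bounds the combined number of matchings having $h$ as witness (for either $e$ or $e'$) by $2$, and an augmenting swap using Lemma \ref{sameside1} at both $e$ and $e'$ together with the $\ge 5$ candidates in each of $HW(e)$ and $HW(e')$ contradicts the maximality of $R$.
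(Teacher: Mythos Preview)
Your setup and proof of the fiber bound $|\phi^{-1}(f)|\le 2$ (and the refinement $|\phi^{-1}(f)|\le 1$ when $\deg_B(f)\ge 3$) are correct and neatly phrased; this cleanly handles property~(1). The genuine problem is that the heart of the lemma --- the augmenting constructions for~(2) and~(3) --- is never actually carried out. Writing ``the explicit augmenting construction requires careful case analysis'' and ``an augmenting swap \ldots\ contradicts the maximality of $R$'' is not a proof; these swaps are exactly where the work lies, and your sketch does not even specify which $R$-edges are removed and which edges are inserted.

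Moreover, your proposed route for~(2) is more complicated than necessary and the counting does not close. You want $G'\in HW(e)\setminus\{G_0\}$ with $\phi(G')\notin\{f^*,f^*_{H_1},f^*_{H_2}\}$; but your own fiber bounds only exclude at most $1+2+2=5$ matchings, which for $d=5$ could be all of $HW(e)$, so such a $G'$ need not exist. Even granting its existence, a swap drawing one edge each from $G_0,G',H_1,H_2$ while deleting $e$ and ``at most three'' further $R$-edges gives at best a matching of the same size, not $q+1$.

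The paper's argument is both simpler and complete: for~(2) one removes only $e$ and $f$ from $R$ and inserts three edges --- the connecting edge $h_{ef}\in H$, an edge $a\in E_f$ from one of two matchings $G,G'\in HW(f)\setminus\{H\}$, and an edge $b\in E_e$ from a fourth matching in $HW(e)$ (available since $\deg_B(e)\ge 4$). Lemma~\ref{sameside1} at both $e$ and $f$ forces $a,b$ to avoid $h_{ef}$, and disjointness of $G,G'$ gives two candidates for $a$, at most one of which can meet $b$. Property~(3) is handled by an analogous explicit $3$-for-$2$ (or $4$-for-$3$) swap in two cases, depending on whether the $E_h$-edges from the two witnessing matchings meet $h$ at the same vertex. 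You should replace your hand-waves with constructions of this shape.
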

\begin{proof}
Let $e\in D$ and write $d=deg_B(e)$. Let $N_B(e)=\{G_1,\dots, G_d\}$, and let $f_1,\dots,f_d \in R$ be edges so that $(e,f_i)$ is a half-$G_i$-wasteful pair. Let $S(e)=\{f_1,\dots,f_d\}$. 

To prove (1) it is enough to show that there do not exist three indices $j,k,\ell \in [d]$ such that $f_j=f_k=f_\ell$. Assume for contradiction that such $j,k,\ell$ exist, and let $f:=f_j=f_k=f_\ell$. For $i\in \{j,k,\ell\}$, let $g^i_e,g^i_f,g^i_{ef}\in G_i$  be edges witnessing the fact that $(e,f)$ is a half-$G_i$-wasteful pair. By Lemma \ref{sameside1},   $g^j_{e} \cap g^k_{e} \cap g^{\ell}_{e} \cap e \neq \emptyset$, implying $g^j_{ef} \cap g^k_{ef} \cap g^{\ell}_{ef} \cap e \neq \emptyset$. Since the edges $g^j_{ef}, g^k_{ef}, g^{\ell}_{ef}$
meet also $f$, at least two of them must be equal, contradicting the fact that the matchings are pairwise disjoint.

To prove (2), let $f\in S(e)$ and assume  $deg_B(f)\ge 3$. Let $H\in HW(e,f)$, with edges $h_e,h_f,h_{ef}\in H$ witnessing this fact. 
Let  $G,G' \in HW(f)$ be  two  matchings different than $H$. 
By Lemma \ref{sameside1}, all the edges in $E_f$ meet $f\cap h_f$, and all the edges in $E_e$ meet $e\cap h_e$. In particular, no edge in $E_f\cup E_e$ intersects $h_{ef}$. 

Let $g\in E_f \cap G$ and $g'\in E_f \cap G'$. Then   $g,g'$ are distinct. Since $\deg_B(e)\ge 4$,  $E_e \setminus (H\cup G' \cup G'')$ contains at least one edge. Thus there exist edges $a \in \{g,g'\}$ and  $b\in E_e \setminus (H\cup G' \cup G'')$ such that  $a,b$ do not intersect. Replacing $e,f$ in $R$ with $a,b$ and $h_{ef}$, we get a larger rainbow matching, a contradiction.

To prove (3), let $e\neq f\in D$ and suppose $h\in S(e) \cap S(f)$.  
Let $A \in HW(e,h)$ and $B \in HW(f,h)$. Since $A,B$ are matchings, $A\neq B$. Let $a_e,a_h,a_{eh} \in A$ and $b_f,b_h,b_{fh} \in B$ be  edges witnessing  $A \in HW(e,h)$ and $B \in HW(f,h)$, respectively.  By Lemma \ref{sameside1}, all the edges in $E_e$ meet $e$ at the same vertex, and all the edges in $E_f$ meet $f$ at the same vertex.

Split into two cases. 
\medskip

{\bf Case 1}: The edges $a_h$ and $b_h$ intersect $h$ at the same vertex. \\ Since  $|E_e \setminus \{a_e,b_e\}|\ge 2$ there exists an edge in  $c\in E_e \setminus \{a_e,b_e\}$ not intersecting $b_h$. Thus $(R\cup \{b_h, a_{eh},c\})\setminus \{e,h\}$ is a larger rainbow matching, a contradiction. 
\medskip

{\bf Case 2}: The edges $a_h$ and $b_h$ intersect $h$ at two different vertices. \\ In this case $a_{eh}, b_{fh}$ do not intersect, and clearly, no edge in $E_e\cup E_f$ intersect either $a_{eh}$ or  $b_{fh}$. Since $deg_B(e),deg_B(f)\ge 5$, there exist edges $c\in E_e \setminus \{a_e,b_e\}$ and $d\in E_f \setminus \{a_f,b_f\}$ so that $\{c,d,a_{eh}, b_{fh}\}$ is a rainbow matching in $\G$. Thus  $(R\cup \{c,d,a_{eh}, b_{fh}\})\setminus \{e,f,h\}$ is a  rainbow matching of size $q+1$, a contradiction. 
\end{proof}

\begin{lemma}\label{sumdegrees1}
$\sum_{e \in R}|HW(e)| \le 4q$
\end{lemma}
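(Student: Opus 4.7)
The plan is to combine Lemma \ref{degrees1} with equation \eqref{eqB}, which reads $\sum_{e\in R}|HW(e)| = |E(B)|$, so the task reduces to showing $|E(B)|\le 4q$. This should fall out of a straightforward double-counting argument once the structural information in Lemma \ref{degrees1} is in hand.

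First I would partition $R$ into $D=\{e\in R : deg_B(e)\ge 5\}$ and its complement, on which $deg_B\le 4$. Lemma \ref{degrees1} supplies, for each $e\in D$, a set $S(e)\subseteq R$ of size at least $deg_B(e)/2$ on which $deg_B\le 2$, with the $S(e)$ pairwise disjoint as $e$ ranges over $D$. Since $deg_B\le 2<5$ on $S(e)$, we have $S(e)\cap D=\emptyset$, so $Z:=\bigcup_{e\in D}S(e)$ is a disjoint union contained in $R\setminus D$, and $|Z|=\sum_{e\in D}|S(e)|\ge \tfrac{1}{2}\sum_{e\in D}deg_B(e)$.

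Next I would split the degree sum of $B$ according to whether a vertex of $R$ lies in $D$, in $Z$, or in $R\setminus(D\cup Z)$:
\[
|E(B)|=\sum_{e\in D}deg_B(e)+\sum_{f\in Z}deg_B(f)+\sum_{f\in R\setminus(D\cup Z)}deg_B(f)\le 2|Z|+2|Z|+4(q-|D|-|Z|)=4q-4|D|\le 4q.
\]
The three bounds use, respectively, the inequality $\sum_{e\in D}deg_B(e)\le 2|Z|$ from the previous paragraph, the degree bound on $Z$, and the fact that $deg_B\le 4$ outside $D$.

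The main obstacle here is really already absorbed into Lemma \ref{degrees1}, whose case analysis on the intersection behaviour along the shared edge $h$ provides the decisive structural input that each high-degree vertex of $R$ is escorted by many distinct low-degree neighbours. Once that lemma is granted, the present estimate is just a two-line double-counting, so I do not anticipate further difficulties.
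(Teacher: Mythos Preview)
Your argument is correct and follows essentially the same route as the paper: both proofs reduce to $|E(B)|\le 4q$ via Lemma~\ref{degrees1}, bounding $\sum_{e\in D}deg_B(e)$ by twice the size of the disjoint union of the sets $S(e)$ (which sit among the low-degree vertices), and then absorbing the remaining vertices using $deg_B\le 4$ off $D$. The paper phrases this via the partition $U_1,\ldots,U_4,U_5=D$ and the inequality $|E_5|\le 2(|U_1|+|U_2|)$, while you phrase it via the partition $D,\,Z,\,R\setminus(D\cup Z)$; the computations are the same, and your version even records the slight sharpening $|E(B)|\le 4q-4|D|$.
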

\begin{proof}
For $i\in [4]$ let $U_i$ be the set of edges in $R$ of degree $i$ in $B$, and let $U_5=D$. For $i\in[5]$, let $E_i$ be the set of edges in $B$ adjacent to a vertex in $U_i$.  
By Lemma \ref{degrees1} we have, $$|E_5| \le 2(|U_1|+|U_2|).$$ 
Therefore,  $$|E(B)| = \sum_{i=1}^{5} |E_i| \le \sum_{i=1}^{4} i|U_i| + 2(|U_1|+|U_2|) \le  4\sum_{i=1}^{5} |U_i| \le 4q.$$ This completes the proof of the lemma.
\end{proof}

Combining (\ref{eq1}), (\ref{eqonehand}) and Lemma \ref{sumdegrees1} we get 
 $$ (3n-4q)(n-q) \le  6q,$$  implying $q> \frac{3n}{4}-\frac{9}{2}$ and proving Part (4). This completes the proof of Theorem \ref{main}.

 \begin{question}
  Does there exist a function $f(r)$ such that
  $(n,n)\to_r n-f(r)$?
  \end{question}

\section{Two proofs of Theorem \ref{monotone_symmetric}}
\label{symmetricsection}


For a rooted tree $G$ and a vertex $v$ on it let $Gv$ be the path on $G$  from the root to $v$.  
The tree $G$ is called
{\em rainbow-monotone}  if the directed path $Gv$ is rainbow-monotone for every $v\in V(G)$.  
For a forest $\F$ whose components are rooted trees  and a vertex $v \in V(\cf)$, we write $\F v$ for the path $Gv$, where $G$ is the component of $\F$ containing $v$. Similarly, $vG$ denotes the sub-tree of $G$ rooted at $v$. 
If $K,L$ are trees sharing a vertex $v$, we denote by 
$KvL$ the edege-wise union of the path $Kv$ and the tree $vL$. This definition does not imply that $KvL$ is a tree, but in our applications it will be.

 \begin{proof}[First  proof of Theorem \ref{monotone_symmetric}]
 
 Inducting on $i$, we grow 
 trees $T_i(s)$ rooted at $s$ for every $s \in S$ and $ i \le m$.  
  At each step 
 we shall denote by $Y^k_i$ the set of vertices   $y \in Y$ that are reached by precisely $k$  trees $T_i(s)$ ($k \ge 0$).

 The inductive construction will maintain two conditions:
 \begin{enumerate}
     \item $Y_i^k =0$ for all $k>2$, and 
     
     \item $T_i(s)$ is rainbow-monotone for every $s \in S$. 
 \end{enumerate}

For the base, $i=0$, let $T_0(s)=(s)$, a single-vertex tree, for every $s \in S$.  Condition (2) is obvious, and (1) is true since $Y^0_0=Y$ and $Y^k_0=\emptyset$ for every $k>0$. 
 
 Suppose $T_i(s)$ have
  been defined for all $s \in S$, and that they satisfy (1) and (2). 
  Let $in(P_{i+1})=p$ and $ter(P_{i+1})=q$. 
  Let $X =V(T_i(p)) \cup Y^2_i$.  Then $p \in X$. Since $q \in S$ we may assume that $q \not \in V(T_i(s))$ for any $s \neq q$, since otherwise the path $T_i(s)q$ is the desired path.  
 
 So, there is an edge $xy \in E(P_{i+1})$ such that $x \in X,  ~y \not \in X$.  
 
 {\bf Case I} $x \in V(T_i(p))$. Define then $T_{i+1}(p)=T_{i}(p) \cup \{xy\}$ (it is a tree since $y \not \in T_i(p)$) and $T_{i+1}(s)=T_{i}(s)$
 for all $s \neq p$. 
 
 {\bf Case II}
  $x \in Y_i^2\setminus V(T_i(p)$.  Since $y \not \in Y_i^2$, there exists $r\in S$ (possibly $r=p$) for which $x \in V(T_i(r))$ and $y \not \in V(T_i(r))$. Then defining  $T_{i+1}(r)=T_{i}(r) \cup \{xy\}$  and $T_{i+1}(s)=T_{i}(s)$
 for all $s \neq r$
 maintains the inductive assumptions.  
 
 Since at each of the $m$ steps we are adding a vertex in $Y$ to the trees, and since no vertex 
 in $Y$ appears more than twice, $m \le 2|Y|$, as desired. 
 \end{proof}

 The second proof is longer, but less of a hocus-pocus. It uses  part 
 (\ref{monotone_disjoint}) of Theorem 
 \ref{combinations}. We give it here since it may contain ideas relevant to Conjecture  \ref{conjpaths}, the alternating paths version of the theorem.

  \begin{proof}[Second Proof of Theorem \ref{monotone_symmetric}]

 First - a blueprint, which will then be given a rigorous formulation. 
 
 
 Assume that there is no rainbow-monotone $S-S$ path. We grow inductively rainbow-monotone  forests $\F_i$, each consisting of 
 trees $T_i(s)$ rooted at $s$, for all $s \in S$. 
 We also keep track of sets $W_i(t)$ of paths for each $ t \in S$. These are ``temporarily wasted'' paths, namely paths not adding an edge to $\F_i$. A ``wasted'' path in  $W_i(t)$ will be used at the end of the procedure to find a rainbow-monotone path ending at $t$.

 Let $\F_0$
consist of the single vertex trees $(s), ~s  \in S$, and  $W_0(t)=\emptyset$ for every  $t \in S$.

The inductive step: If there exists an edge  $xy \in E(P_{i+1})$ such that $x \in V[\F_i]$ and $y \not  \in V[\F_i]$ choose one such edge and add it to $\F_i$ to form $\F_{i+1}$. If not, let $\F_{i+1}=\F_i$ and let $W_{i+1}(ter(P_{i+1}))= W_{i}(ter(P_{i+1}))  \cup \{P_{i+1}\}$, and $W_{i+1}(t)=W_{i}(t)$ for every $t \neq ter(P_{i+1})$. 

Each non-wasted path $P_i$ adds a vertex of $Y$ to $V[\F_i]$. Hence the number of such paths is at most $|Y|$. To get the desired inequality $m \le  2|Y|$, it suffices then to show that the number of wasted paths is at most $|Y|$. This will follow from: 

$$|W_m(t)| \le |V[T_m(t)]\setminus \{t\}| ~~\text {for every}~~ t \in S.$$

To see this, assume $|W_m(t)| > |V[T_m(t)]\setminus \{t\}|$. Contract $V\setminus V[T_m(t)]$ to a single vertex $z$. 
Let   $Q_i, ~~i \le m$ consist  
of the part of the path $P_i$ contained in $V[T_m(t)]$, and let $P_i'$ be a path in the contracted graph, 
defined by $E(P_i')=E(Q_i) \cup \{z~ in(Q_i)\}$ (namely, $Q_i$ with $z$ appended to its initial vertex). 
 Then by part (\ref{monotone_disjoint})  of Theorem \ref{combinations} there is a rainbow-monotone path (with respect to the paths $P'_i$) from $z$ to $t$, which can then be extended (by uncontracting $z$) to a rainbow-monotone  $s-t$ path for some $s \neq t$. 
\medskip 

A  rigorous argument performs both types of steps together.  We use the ``wasted'' paths as we go along, instead of waiting till the end of the process.  

Below we use the symbol $\diamondsuit$ to mark the conclusion of an intermediate
step in a proof.

 We construct inductively. for $j \le m$,  non-decreasing (containment-wise) disjoint trees   $T_{{j}}(s), ~s \in S$, and non-decreasing 
 forests $\Z_j(s), ~s \in S$.



 The inductive construction will maintain the following properties. 
 
 \begin{enumerate}[ (P1)]
 
 \item At each step precisely one $T_j(s)$ or one  $\Z_j(s)$ grows, meaning that it is added a  vertex from $Y$, not  met as yet.
 
 \item The tree $T_{j}(s)$ is rooted at $s\in S$, and $V(T_{j}(s)) \cap V(T_{j}(s'))=\emptyset$ whenever $s\neq s'$.
 
 \item The union $\T_j$ of the trees $T_{j}(s), ~s \in S,$ is a forest. 
 
 \item For $s\in S$, 
  the forest $\Z_j(s)$  is the union of rooted trees, having a special form: each tree in $\Z_j(s)$ has its root $r$ in $V(G_j(s'))$ for some $s'\in S, ~s'\neq s$,   and its other vertices in $V(T_j(s))$.  We denote such a tree by  $Z_j(s,r)$.


\item  
At each step a new vertex from $Y$ is added either to  $\T_j$ or to $\Z_j(s)$ for some $s \in S$. 
 
 \item \label{zafterg}
 Edges are added to  $Z_{i}(s,r)$  for $i \ge j$ only after the path $\T_j r$ terminating at $r$ has been constructed.
 
 \item The forests $\T_j$ and $\Z_j(s)$ are rainbow-monotone.
 
 \item  If two trees, $U \in \Z_j(s_1)$ and $W \in \Z_j(s_2)$ meet, where $s_1 \neq s_2$, then they share only one vertex, which is the root of at least one of them. 
 \end{enumerate}

 (The secret wish of the trees in  $\Z_j(s)$  is to reach $s$. If fulfilled, 
the path in $\T_j r \Z_j(r,s)$ going from the the root $s'$ of the tree in $\T_j$ containing $r$ to $s$, will then be a rainbow  $S-S$ path.)

 Combined, these properties yield:
 
 \begin{claim}\label{isatree}

 The path $\T_j r$ does not meet $V(Z_j(s,r))\setminus \{r\}$, hence $\T_j r Z_j(s,r)$ is a tree.

 \end{claim}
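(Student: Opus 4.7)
The plan is to unpack the invariants (P2) and (P4) maintained by the inductive construction; together they force the required disjointness essentially by bookkeeping. First I would identify the component of $\T_j$ that contains the root $r$ of $Z_j(s,r)$. By (P4), since $Z_j(s,r)$ is a tree in $\Z_j(s)$, its root $r$ belongs to $V(T_j(s'))$ for some $s' \in S$ with $s' \neq s$. Because the trees $\{T_j(s'') : s'' \in S\}$ are pairwise vertex-disjoint by (P2), the component of $\T_j$ containing $r$ is exactly $T_j(s')$, and hence the path $\T_j r$ from the root $s'$ of this component to $r$ is entirely contained in $V(T_j(s'))$.

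Next I would locate the vertices of $V(Z_j(s,r)) \setminus \{r\}$. By the second half of (P4), all vertices of $Z_j(s,r)$ other than the root $r$ lie in $V(T_j(s))$. Applying (P2) once more, since $s \neq s'$ we have $V(T_j(s)) \cap V(T_j(s')) = \emptyset$. Combining the two inclusions gives
\[
V(\T_j r) \cap \bigl(V(Z_j(s,r)) \setminus \{r\}\bigr) \subseteq V(T_j(s')) \cap V(T_j(s)) = \emptyset,
\]
which is precisely the first half of the claim.

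For the second half, observe that $\T_j r$ is a simple path (hence a tree) ending at $r$, and $Z_j(s,r)$ is a tree rooted at $r$. The display above shows they share exactly the single vertex $r$, and the union of two trees meeting in exactly one vertex is itself a tree; this gives that $\T_j r Z_j(s,r)$ is a tree, as required. There is no real obstacle here: the only subtlety to double-check is that the vertex $r$ genuinely lies in both $\T_j r$ (as endpoint) and $Z_j(s,r)$ (as root) and nowhere else in their intersection, which is exactly the scenario allowing the gluing $\T_j r Z_j(s,r)$ to make sense as a tree.
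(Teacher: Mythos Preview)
Your argument is correct and matches the paper's intent: the paper does not spell out a proof but simply says ``Combined, these properties yield'' the claim, and the relevant properties are exactly (P2) and (P4), which you unpack correctly. The only thing to note is that the paper writes $G_j(s')$ in (P4), which is evidently a typo for $T_j(s')$; you have read it this way, which is right.
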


 Having introduced the protagonists of the proof and their intended inductively preserved properties,  let us  describe the inductive construction. The preservation of the properties (P1)-(P8) will be mostly easy to validate - we comment only on those that are not obvious. 
 Let $\G_{(0,0)}$ consist of the singleton trees $(s), ~~s \in S$, and let
 $\Z_{(0,0)}(s)=\emptyset$ for every $s\in S$. 
 Suppose $\G_{j}$ and $\Z_{j}(s),~s\in S,$ have been defined; let $j^+=(k,i)$ be the  index following $j$ in the lexicographic order, and let  $P=P^i_k$. We shall use $P$ to extend either $\T_j$ or $\Z_j(s)$ for some $s \in S$, maintaining monotonicity. Below we assume, for contradiction, that such an extension is not possible.

 \begin{claim}\label{consistent}
 $V(P) \subseteq V(\T_j)$.
  \end{claim}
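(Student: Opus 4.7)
The plan is to argue by contradiction: if some vertex of $P$ lay outside $V(\T_j)$, then $\T_j$ could be extended via $P$, contradicting the standing hypothesis that neither $\T_j$ nor any $\Z_j(s)$ admits an extension using $P$. Since $P=P^i_k$ is a directed $S$--$S$ path and each tree $T_j(s)$ contains its root $s\in S$, both $in(P)$ and $ter(P)$ already lie in $V(\T_j)$. So if $V(P)\not\subseteq V(\T_j)$, traversing $P$ in its orientation locates an edge $xy\in E(P)$ with $x\in V(\T_j)$ and $y\notin V(\T_j)$.

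By (P2) there is a unique $s^\star\in S$ with $x\in V(T_j(s^\star))$, and I would propose the candidate extension $T_{j^+}(s^\star):=T_j(s^\star)\cup\{xy\}$, leaving all other $T_j(s')$ and all $\Z_j(\cdot)$ unchanged. I would then check that the inductive invariants (P1)--(P8) are preserved. Properties (P2) and (P3) are immediate from $y\notin V(\T_j)$, which makes $T_{j^+}(s^\star)$ a tree rooted at $s^\star$ whose vertex set is disjoint from the other $T_{j^+}(s')$. Rainbow-monotonicity (P7) is automatic from the lexicographic order in which paths are processed: every edge already on the path $T_j(s^\star)x$ was inserted at some earlier inductive step and hence carries an index strictly smaller than $j^+$, while the new edge $xy$ comes from $P=P_{j^+}$; thus the sequence of edge indices along the path $T_{j^+}(s^\star)y$ is strictly increasing. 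The candidate extension is therefore legal, contradicting the hypothesis, and so $V(P)\subseteq V(\T_j)$.

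I expect the main subtlety to be checking that leaving $\Z_j$ unchanged does not collide with invariants (P4)--(P6), (P8) in the event that $y$ touches some existing tree of $\Z_j$. The structural constraints on $\Z_j$, in particular the declared location of non-root vertices in its trees, are set up precisely so that a vertex outside $V(\T_j)$ cannot cause such a collision, making this a bookkeeping step rather than a substantive obstacle.
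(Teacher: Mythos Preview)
Your proposal is correct and follows exactly the paper's argument: the paper's proof is the single sentence ``If not, the first edge on $P$ leaving $V(\T_j)$ could be added to $\T_j$.'' Your version merely spells out the invariant checks that the paper leaves implicit; in particular your worry about (P4)--(P8) is indeed vacuous, since by (P4) every vertex of every $\Z_j(s)$ already lies in $V(\T_j)$, so a vertex $y\notin V(\T_j)$ cannot touch any $\Z_j$ tree.
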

  \noindent If not, the first edge on $P$ leaving $V(\T_j)$ could be added to $\T_j$. \hfill $\diamondsuit$

\begin{claim}
The forests $\T_j$, as well as   the trees  $\T_j r Z_j(s,r)$,  are rainbow-monotone. 
 \end{claim}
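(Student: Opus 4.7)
The plan is to prove both statements by induction on the step-index $j$, exploiting the fact that at each step exactly one edge is added to some component of $\T_j$ or of some $\Z_j(s)$, and that edge is drawn from the path $P_{j^+}$ corresponding to the current step. In particular, the ``label'' on any edge (i.e., the index of the path it came from) equals the step at which the edge was introduced, so labels along any branch coincide with the order of introduction. The base case $j=(0,0)$ is vacuous since the trees are single vertices.

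For the first assertion, monotonicity of $\T_j$ follows directly: when an edge $xy \in E(P_{j^+})$ is added to extend $\T_j$, the vertex $x$ already lies in $V(\T_j)$ and $y$ is new by (P1), so the new edge becomes the last edge on the unique root-to-$y$ path, and every prior edge on that path was introduced at an earlier step, hence from a path of smaller index. For the second assertion, fix a tree $\T_j r Z_j(s,r)$ and a vertex $w$ in it. If $w \in V(\T_j r)$ then the root-to-$w$ path lies entirely in $\T_j$ and its monotonicity is inherited from the first assertion. Otherwise the root-to-$w$ path splits as the concatenation of $\T_j r$ (ending at $r$) followed by the root-to-$w$ path inside $Z_j(s,r)$; the second segment is monotone by the inductive version of the same argument applied to $\Z_j(s)$. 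The only thing left is to see that every edge-label on $\T_j r$ is strictly smaller than every edge-label on the $Z_j(s,r)$-segment; this is exactly the content of property (P6), which stipulates that edges are added to $Z_i(s,r)$ only after the path $\T_j r$ has been completed.

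The main obstacle is to confirm that the concatenation $\T_j r Z_j(s,r)$ is indeed a simple path — more precisely, a tree in which the root-to-$w$ walk does not revisit $r$ — so that ``monotonicity'' is a meaningful condition. This is handled by Claim \ref{isatree}, and once that is in hand, monotonicity is just a bookkeeping consequence of (P6) and (P7). Otherwise, nothing deep is needed: the argument reduces to the observation that the inductive construction assigns indices to edges in strictly increasing order along every root-path inside $\T_j$ and inside $\Z_j(s)$, and that (P6) linearly orders ``$\T_j r$ first, then $Z_j(s,r)$.''
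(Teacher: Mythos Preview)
Your proof is correct and follows essentially the same route as the paper: both arguments rest on the observation that every newly added edge is attached as a leaf carrying the current (hence largest-so-far) step/path index, so labels strictly increase along every root-path in $\T_j$ and in each $Z_j(s,r)$, and then invoke property (P6) to glue $\T_j r$ to $Z_j(s,r)$. The paper additionally appeals to the vertex-disjointness of the paths $P^k_i$ and $P^k_j$ within the same $H_k$ to argue that even the first coordinate $k$ is strictly increasing along each branch---a stronger statement, pertinent to the strongly-rainbow formulation---whereas for plain rainbow-monotonicity your direct ``label $=$ step of introduction'' bookkeeping already suffices.
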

 \noindent{\em Proof of the claim.}
In the construction we go over  the paths $P^k_i$ one by one. When an edge $e$ from $P^k_i$ is added, it is at the top of a tree. When we get to $P^k_j,~ j>i$, we cannot put an edge from it right after $e$, because $P^k_i$ and $P^k_j$ are vertex disjoint. So $e$ will be put on top of a path in a tree, that consists of edges from $P^t_z$ for $t<k$. This explains the monotonicity of both $\T_j(s)$ and $Z_j(s,r)$. 
By (\ref{zafterg}) it follows that their concatenation $\T_j r Z_j(s,r)$  is also rainbow-monotone.  \hfill $\diamondsuit$


 \begin{claim}\label{noz}
 If $s \in S$  then   $s \not \in V(\Z_j(s))$. 
 \end{claim}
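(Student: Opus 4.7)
The plan is to argue by contradiction, directly invoking the ``secret wish'' mechanism highlighted in the parenthetical after~(P8). Suppose that some $s \in S$ satisfies $s \in V(\Z_j(s))$. By~(P2), the trees $T_j(s')$ are rooted at the respective $s'$ and are pairwise vertex-disjoint, so the only element of $S$ lying in $V(T_j(s))$ is $s$ itself. Hence, appealing to the structural condition in~(P4), the vertex $s$ must occur as a non-root vertex of some component $Z_j(s,r)$ of $\Z_j(s)$, whose root $r$ lies in $V(T_j(s'))$ for some $s' \neq s$ and whose remaining vertices lie in $V(T_j(s))$.

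Next, I would consider the path from $s'$ (the root of $T_j(s')$) to $r$ inside $T_j(s')$, concatenated with the path from $r$ to $s$ inside $Z_j(s,r)$. By Claim~\ref{isatree} the composite $\T_j\, r\, Z_j(s,r)$ is a tree, so this concatenation describes a genuine simple path from $s'$ to $s$. Its internal vertices all lie in $Y$: the first segment avoids $S \setminus \{s'\}$ by the disjointness in~(P2), and the second segment lies in $V(T_j(s))$, which by the same disjointness meets $S$ only at $s$.

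Finally, the rainbow-monotonicity of $\T_j\, r\, Z_j(s,r)$ established in the claim immediately preceding Claim~\ref{noz} says that every root-to-vertex path of this tree is rainbow-monotone; in particular so is the $s'$-$s$ path just constructed. Since $s' \neq s$, this exhibits a rainbow-monotone $S$-$S$ path, contradicting the standing assumption that no such path exists. The only step that requires a little care is checking that the two segments really join into a simple path whose internal vertices all lie in $Y$, but this is exactly what (P2), (P4), and Claim~\ref{isatree} deliver, so no fresh obstacle appears.
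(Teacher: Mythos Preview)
Your proposal is correct and follows exactly the paper's approach: assume $s \in V(\Z_j(s))$, locate the component $Z_j(s,r)$ containing $s$, and observe that the root-to-$s$ path in the tree $\T_j\,r\,Z_j(s,r)$ is a rainbow-monotone $S$--$S$ path, contradicting the standing assumption. The paper compresses this into a single sentence, while you spell out the supporting details (why $s$ is not the root $r$, why internal vertices lie in $Y$, and which earlier claims supply rainbow-monotonicity and the tree property), but the substance is identical.
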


  \noindent Indeed, if $s  \in V(Z_j(s,r))$ then the path $\T_j rZ_j(s,r)s$ terminating at $s$ is a rainbow-monotone $S-S$ path.   \hfill $\diamondsuit$
\\
 
  Let $t=ter(P)$. By Claim \ref{noz},   $t \not \in V(\Z_j(t))$. Let $v$ be the first vertex on $P$ that belongs to $V(G_j(t)) \setminus  V(\Z_j(t))$, and let $u$ be the vertex preceding it on $P$. We now add the  edge $uv$ to the forest $\Z_j(t)$. There are   two possible cases. 
  \medskip
  
  {\bf Case I.} $u  \in V(G_j(t))$. 
    Then the edge $uv$ is added to the existing tree in $\Z_j(t)$  containing $u$.
  \medskip

  {\bf Case II.} $u \not \in V(G_j(t))$. 
  It is possible that $u$ has been used already as a root of a tree $Z_j(t,u)$ in $\Z_j(t)$. 
  In this case we add the  edge $uv$ to  $Z_j(t,u)$. 
   Otherwise, a new  tree
  $Z_{j ^+} (t,u)$ is created in $\Z_{j ^+}(t)$, consisting of the single edge $uv$.

Note that in any of these cases  $\Z_j(t)$ is enlarged by the addition of the vertex $v$.  
    \medskip   
      

Since each step uses one path $P_i$  and since at each step either a rainbow $S-S$ path emerges, or one of the forests $\T_j$ or $\Z_j$ annexes a vertex of $Y$,   
it follows that $ m \le 2|Y|$, as desired. 
  \end{proof}

\end{document}